\documentclass[reqno,12pt]{amsart} %
\usepackage{amsmath,amstext, amsthm, amssymb,amsfonts}
\usepackage{fancybox,color,soul}


 \usepackage[top=1 in,bottom=1in, left=1 in, right= 1 in]{geometry}




\numberwithin{equation}{section}



\def\RR{{\mathbb R}}
\def\CC{{\mathbb C}}

\def\NN{{\mathbb N}}


\newcommand{\la}{\lambda}


\newtheorem{theorem}{Theorem}[section]
\newtheorem{proposition}{Proposition}[section]
\newtheorem{lemma}{Lemma}[section]
\newtheorem{corollary}{Corollary}[section]
\theoremstyle{definition}
\newtheorem{definition}{Definition}[section]
\newtheorem{remark}{Remark}[section]


\newcount\mins \newcount\hours \hours=\time \mins=\time
\divide\hours by60 \multiply\hours by 60 \advance\mins by-\hours
\divide\hours by60

\def\now{ \ifnum\hours>11 \ifnum\hours>12 \advance\hours by
-12 \fi \number\hours:\ifnum\mins<10 0\fi \number\mins\ pm,\ \else
\ifnum\hours=0 \hours=12 \fi \number\hours:\ifnum\mins<10 0\fi
\number\mins\ am,\ \fi}


\newcommand{\V}{\mathbb{V}}




\newcommand{\CSK}{Cauchy-Stieltjes Kernel (CSK)\ \renewcommand{\CSK}{CSK\ }}
\newcommand{\FEF}{Free Exponential  (FE)\ \renewcommand{\FEF}{FE\ }}

\keywords{kernel families; generalized orthogonality; $R$-transform; $S$-transform; Fuss-Catalan numbers; variance functions; free additive convolution; free multiplicative convolution; }

\title[CSK with polynomial variance functions]{Cauchy-Stieltjes families with polynomial variance functions and generalized orthogonality}

\author{
W{\l}odzimierz  Bryc}
 \address{
Department of Mathematical Sciences\\
University of Cincinnati\\
PO Box 210025\\
Cincinnati, OH 45221--0025, USA} \email{Wlodzimierz.Bryc@uc.edu}

\author{Raouf Fakhfakh}
\address{  Mathematics Department, College of Science and Arts in Gurayat, Jouf University, Gurayat, Saudi Arabia  \&  Laboratory of Probability and Statistics,
Sfax University,  Sfax, Tunisia}
\email{fakhfakh.raouf@gmail.com}

\author{Wojciech M{\l}otkowski}
\address{
 Mathematical Institute\\ University of Wroclaw,
  Pl. Grunwaldzki 2/4 50-384 Wroc\l aw, Poland }
\email{mlotkow@math.uni.wroc.pl}

 \subjclass[2000]{60E10; 46L54; 62E10;05A15}

\date{\today}
\begin{document}

\maketitle

\begin{abstract}
This paper studies variance functions of \CSK families generated by compactly supported centered probability measures.
  We describe several operations that allow us to construct additional  variance functions from known ones. %
  We construct a class of examples which   exhausts all  cubic variance functions, and provide   examples of polynomial variance functions of arbitrary degree.
  We  also relate   \CSK families with polynomial variance functions to generalized orthogonality.

Our main results are  stated solely in terms of classical probability;  some proofs rely on analytic machinery of free probability.
 \end{abstract}

\section{Introduction and main results}

The \CSK families of probability measures  were  introduced in \cite{Bryc-06-08} and extended to
non-compact setting in \cite{Bryc-Hassairi-09}. The constructive approach adopted in these papers   is based on an idea of kernel family
from an unpublished manuscript \cite{Wesolowski90}. The construction emphasizes
analogies with exponential families, using  the Cauchy-Stieltjes kernel $1/(1-\theta
x)$ instead of the exponential kernel $\exp({\theta x})$, and establishing parametrization by the mean. Kernels of the form $h(x\theta)$, including   $1/(1-\theta
x)^a$, appear also in   \cite{Kubo-Kuo-2007} and the references cited therein.

After re-parametrization by the mean, \CSK families  are also a special case $q=0$ of the $q$-exponential families from
\cite{Bryc-Ismail-05}. %
The non-constructive definition  from \cite[Section 4]{Bryc-Ismail-05} is most convenient  for our purposes, as it
emphasizes the role of the pseudo-variance function, which appears directly in the definition.
 \begin{definition}
 The \textit{\CSK family} with a \textit{pseudo-variance function} $\V$ generated by a  compactly supported  non-degenerate  probability measure $\nu$ is a family of probability measures
\begin{equation*} \label{F(V)}
\left\{ Q_m(dx):=f(x,m)\nu(dx): \;m\in(m_-,m_+)\right\},
\end{equation*}
where
\begin{equation}
  \label{f(x,m)}
f(x,m):=\begin{cases}
\frac{\V(m)}{\V(m)+m(m-x)} & m\ne 0;\\
1 & m=0, \V(0)\ne 0; \\
\frac{\V'(0)}{\V'(0)-x} & m=0, \V(0)=0.
\end{cases}
\end{equation}
\end{definition}

The interval $(m_-,m_+)$ is sometimes called the domain of means, but it will not play a major role here.
 We will only assume that $0\in(m_-,m_+)$ and $\V(0)\ne 0$. Then \eqref{f(x,m)} is the solution of the difference equation
\begin{equation}
  \label{DDf} %
  \frac{f(x,m)-f(x,0)}{m}=\frac{x-m}{\V(m)} f(x,m), \quad f(x,0)=1,
\end{equation}
which is a discrete analog of the  differential equation  for exponential families noted in \cite[Theorem 2]{Wedderburn74} (see
also \cite[Section 5]{diBuc:loeb} and \cite{Bryc-Ismail-05}).

It is known that measure $\nu$, if it exists, is uniquely determined (up to the mean) by $\V$, see \cite{Bryc-06-08}.
It is also known that any non-degenerate compactly supported probability measure  $\nu$ gives rise to a unique (real analytic) function $\V$, which we will sometimes denote by $\V_\nu$.  On the other hand, not every function $\V$ can appear as a  pseudo-variance
function. The question of  determining whether
a given class of functions $\V$ corresponds to some measures $\nu$ generated a sizeable literature both for the exponential and more recently for the \CSK families.
 In the theory of exponential families, all quadratic variance functions were determined in \cite{Ism:May}
 and  in \cite{Mor}. All cubic variance functions up to affine transformations are described in
\cite{Let:Mor}. Ref.  \cite{hassairi2004characterization}  characterizes cubic variance
functions by  generalized orthogonality. Numerous non-polynomial variance functions have also been studied, see \cite{Let}; see also \cite[Section 2]{Bryc-Ismail-05}.

The literature  about the variance functions of the \CSK families is less comprehensive.  \CSK families with quadratic variance
functions were determined   in \cite{Bryc-06-08,Bryc-Ismail-05}, see also \cite{Fakhfakh2017}.
  Cubic (pseudo)~variance functions with $\V(0)=0$ have been studied in
\cite{Bryc-Hassairi-09} and they correspond to measures without first moment.  In contrast to exponential families,  \CSK
families are not invariant under translation, so cubic variance functions with  $\V(0)\ne 0$ cannot be reduced to the case
studied in \cite{Bryc-Hassairi-09} and require separate investigation. This paper is devoted solely to the case $\V(0)\ne0$.

We now recall some formulas and  assumptions that we will rely upon.
It is   known
(see \cite[Proposition 3.1]{Bryc-Hassairi-09} or \cite[(3.4)]{Bryc-Ismail-05}) that for $m\ne 0$
\begin{equation}
  \label{mQ}
  \int x Q_m(dx)=m,
\end{equation}
so   family $\{Q_m:m\in(m_-,m_+)\}$ is indeed parameterized by the mean. One can show that if $\nu$ has all moments, $0\in
(m_-,m_+)$, and $\V(0)\ne 0$, then \eqref{mQ} extends by continuity to
\begin{equation}
  \label{m_0}
  \int x\nu(dx)=0.
\end{equation}
We will simply assume \eqref{m_0}. It is then known, and easy to check, that the pseudo-variance
function that appears in \eqref{f(x,m)} is indeed the variance function,
\begin{equation}
  \label{QVar}
  \V(m)=\int (x-m)^2Q_m(dx),
\end{equation}
 see  \cite[Proposition 3.2]{Bryc-Hassairi-09}, and  \cite[(3.4)]{Bryc-Ismail-05} where a more general case was considered.

Denote by  $\mathcal{V}$ the class of variance functions corresponding to
probability measures $\nu$ such that $\nu$ is compactly supported, centered: $\int x\nu(dx)=0$, with variance  $\int
x^2\nu(dx)=1$, so that $\V_{\nu}(0)=1$.
Denote by $\mathcal{V}_{\infty}$ the class of those $\V\in\mathcal{V}$ that  the function
 $m\mapsto \V(cm)$  is in $\mathcal{V}$ for every real $c$.

We begin with some algebraic operations that allow to build new variance functions from known ones. %
(Here we write $\V(m)$ for a function, not its value.)
\begin{theorem} \label{Prop:operations}
Assume that $\V(m)\in\mathcal{V}$, $\V_1(m), \V_2(m)\in\mathcal{V}_{\infty}$ and
 $c\ge1$.
Then
\begin{enumerate}
  \item \label{P-W-3} $\V(m/c)\in\mathcal{V}$;
\item\label{P-W-2}  $\V(m)+a m\in\mathcal{V}$  and   $\V_1(m)+a m\in\mathcal{V}_\infty$ for any $a\in\mathbb{R}$;
\item\label{P-W-4}  $\V_1(m)+\V_2(m)-1\in\mathcal{V}_{\infty}$ and
$c \V_1(m)-c+1\in\mathcal{V}_{\infty}$;
\item \label{P:V-z^2} $\V_1(m)-m^2\in\mathcal{V}$;
\item \label{P:V+z^2} $\V(m)+m^2\in\mathcal{V}_\infty$.
\end{enumerate}
\end{theorem}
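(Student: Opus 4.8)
The plan is to convert each algebraic operation on $\V$ into an operation on the generating measure $\nu$ and then read off the claim. Everything rests on the pair of identities, valid for a compactly supported centered $\nu$ with Cauchy transform $G_\nu(z)=\int(z-x)^{-1}\nu(dx)$ and reciprocal $F_\nu=1/G_\nu$,
\begin{equation*}
 m=z-\frac1{G_\nu(z)},\qquad \V(m)=m\,(z-m)=\frac{m}{G_\nu(z)},
\end{equation*}
which invert to $z=m+\V(m)/m$ and $G_\nu=m/\V(m)$, so that $\nu$ and $\V$ determine each other. Writing $m=R_\nu(w)$, $w=G_\nu(z)$ for the $R$-transform, these identities yield the three rules I use repeatedly: the dilation $x\mapsto ax$ sends $\V(m)\mapsto a^2\V(m/a)$; the free power $\nu^{\boxplus t}$ sends $\V(m)\mapsto t\,\V(m/t)$; and the Boolean power $\nu^{\uplus t}$ sends $\V(m)\mapsto t\,\V(m/t)-\tfrac{t-1}{t}\,m^2$. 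Finally I read part (1) as saying that $\mathcal{V}_\infty$ is exactly $\{\V_\nu:\nu\text{ freely infinitely divisible}\}$: membership amounts to the existence, with compact support, of all fractional free powers $\nu^{\boxplus t}$, $0<t<1$.

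For (1) take $t=c^2\ge1$, form $\nu^{\boxplus c^2}$ (which exists and is compactly supported for $t\ge1$ by Nica--Speicher) and renormalise by $x\mapsto x/c$; the free-power and dilation rules compose to give precisely $\V(m/c)$, and the result is centered with unit variance. For the first part of (2) I check directly that $\tilde F:=F_\nu(\,\cdot-a)+a$ is again the reciprocal Cauchy transform of a compactly supported law: it maps the upper half-plane to itself and satisfies $\tilde F(z)=z-1/z+O(z^{-2})$, so the generated measure is centered with unit variance, and a one-line computation of its mean function gives variance function $\V(m)+am$. The two $\mathcal{V}_\infty$ assertions in (2) are then bootstrapping: $(\V_1+am)(cm)=\V_1(cm)+acm$ with $\V_1(cm)\in\mathcal{V}$ for all $c$, so the $\mathcal{V}$-statement just proved (with constant $ac$) puts $(\V_1+am)(cm)$ in $\mathcal{V}$, which is the definition of $\mathcal{V}_\infty$.

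Claims (4) and (5) I obtain from the Belinschi--Nica semigroup $\mathbb{B}_t(\mu)=\bigl(\mu^{\boxplus(1+t)}\bigr)^{\uplus\,1/(1+t)}$. Composing the free-power rule (with $s=1+t$) and then the Boolean-power rule (with exponent $1/s$) collapses to the clean action $\mathbb{B}_t\colon\V\mapsto\V+t\,m^2$. For (5) I take $t=1$: since $\mathbb{B}_1$ is the Boolean-to-free Bercovici--Pata bijection $\Lambda$, the measure $\mathbb{B}_1(\nu)$ is freely infinitely divisible for every compactly supported $\nu$, so its variance function $\V+m^2$ lies in $\mathcal{V}_\infty$. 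For (4) I take $t=-1$: the hypothesis $\V_1\in\mathcal{V}_\infty$ says $\nu_1$ is freely infinitely divisible, which is exactly the condition (free divisibility indicator $\ge1$) under which $\mathbb{B}_{-1}(\nu_1)$ is a well-defined compactly supported law, of variance function $\V_1-m^2\in\mathcal{V}$.

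The main obstacle is (3). The operation $\V_1+\V_2-1$ --- equivalently $1/G_{12}=1/G_1+1/G_2-1/m$ read at a common mean $m$ --- is \emph{not} free, Boolean, or monotone convolution: on the free Meixner functions $1+bm^2$ it simply adds the parameter $b$, whereas any such convolution adds the free cumulants, and those depend nonlinearly on $b$ (already $\kappa_6=2b^2$), so no linearising transform makes it additive. I would therefore prove (3) directly, forming the candidate transform from $z_{12}(m)=m+(\V_1+\V_2-1)/m$ and $G_{12}=m/(\V_1+\V_2-1)$ and verifying that $G_{12}$ is a genuine Cauchy transform (a Nevanlinna--Pick function with $zG_{12}(z)\to1$), the positivity being exactly where the hypothesis $\V_1,\V_2\in\mathcal{V}_\infty$ is used; that it cannot be relaxed is visible from $(1-m^2)+(1-m^2)-1=1-2m^2\notin\mathcal{V}$. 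The second identity in (3) then reduces to the first: for integer $c$ it is the $c$-fold iterate $\V_1+\cdots+\V_1-(c-1)$, and general $c\ge1$ follows by the continuity/fractional-power argument already used for $\mathcal{V}_\infty$.
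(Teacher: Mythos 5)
Your treatment of part \eqref{P-W-4} contains the one genuine gap, and it stems from a misidentification. You argue that the operation $\V_1+\V_2-1$ cannot be linearized by any cumulant-type transform, because on $\V=1+bm^2$ it adds the parameter $b$ while the free cumulants of the generating measure $\nu$ depend nonlinearly on $b$. But the relevant measure is not $\nu$: the paper's Proposition \ref{P:W-1} shows that $\V\in\mathcal{V}_\infty$ if and only if $\V(z)=1+R_\omega(z)$ for a compactly supported \emph{probability} measure $\omega$ (the representing measure in the free L\'evy--Khinchine formula \eqref{M2r}, whose total mass is normalized by the variance hypothesis $\V(0)=1$). In that coordinate the operation \emph{is} exactly linear: $\V_1+\V_2-1=1+R_{\omega_1\boxplus\omega_2}$ and $c\V_1-c+1=1+R_{\omega_1^{\boxplus c}}$, the latter existing for all real $c\ge1$ by Nica--Speicher. (In your test case $1+bm^2$ the measure $\omega$ is a semicircle law of variance $b$, and semicircle variances do add.) Having rejected this route, you propose to verify directly that $G_{12}=m/(\V_1+\V_2-1)$ composed with $z=m+(\V_1+\V_2-1)/m$ is a Nevanlinna--Pick function, but you give no argument for the positivity --- which is the entire content of the claim --- nor do you explain how the hypothesis $\V_1,\V_2\in\mathcal{V}_\infty$ enters. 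Your fallback for $c\V_1-c+1$ (integer iteration plus ``continuity/fractional powers'') also does not close the gap: iteration presupposes the unproved first half, and no interpolation argument between integer values is supplied. As it stands, part \eqref{P-W-4} is not proved.

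The remaining parts are essentially sound. Part \eqref{P-W-3} is the paper's own argument (dilated free convolution powers). Part \eqref{P-W-2} uses the same shift $F_a(z)=F(z-a)+a$ as the paper; your verification via $\Im F_a(z)\ge\Im z$ is a legitimate alternative to the paper's continued-fraction/Favard argument, but you assert compact support of the new measure without proof --- the paper supplies this via Krein's theorem, using that $F$ extends to the real axis away from the support. Parts \eqref{P:V-z^2} and \eqref{P:V+z^2} via the Belinschi--Nica semigroup $\mathbb{B}_t$ and the identity $\mathbb{B}_t\colon\V\mapsto\V+tm^2$ (which I checked is correct given your dilation, free-power and Boolean-power rules) form a genuinely different and rather elegant route: the paper instead constructs the new measure by hand through the reciprocal-Cauchy-transform identity $M_\nu(z)=1/(1-z^2M_\omega(z))$ of Lemma \ref{L:inverse}. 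Your route buys conceptual clarity ($\mathbb{B}_1=\Lambda$ maps everything into the freely infinitely divisible laws, and $\mathbb{B}_{-1}$ is defined precisely on them) at the cost of importing the Belinschi--Nica theorem and of having to check separately that $\Lambda^{\pm1}$ preserves compact support, which you do not do.
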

The proof of this theorem appears in Section \ref{ProofofT1.3}.
\begin{corollary}
The map $\V(m)\mapsto \V(m)-m^2$ is a bijection
of $\mathcal{V}_{\infty}$ onto $\mathcal{V}$.
\end{corollary}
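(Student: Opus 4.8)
The plan is to exhibit the inverse map explicitly and then verify that both maps land in the correct classes; bijectivity is afterwards purely formal. Define $\Phi\colon\mathcal{V}_{\infty}\to\mathcal{V}$ by $\Phi(\V)(m)=\V(m)-m^2$ and $\Psi\colon\mathcal{V}\to\mathcal{V}_{\infty}$ by $\Psi(\V)(m)=\V(m)+m^2$. The essential content is that these are genuinely well-defined \emph{between the stated classes}: part~\eqref{P:V-z^2} of Theorem~\ref{Prop:operations} guarantees that $\Phi$ sends $\mathcal{V}_{\infty}$ into $\mathcal{V}$, and part~\eqref{P:V+z^2} guarantees that $\Psi$ sends $\mathcal{V}$ into $\mathcal{V}_{\infty}$.

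Once both maps are known to respect the classes, I would simply compute the two compositions at the level of functions. For $\V\in\mathcal{V}_{\infty}$,
\[
\Psi(\Phi(\V))(m)=\bigl(\V(m)-m^2\bigr)+m^2=\V(m),
\]
and for $\V\in\mathcal{V}$,
\[
\Phi(\Psi(\V))(m)=\bigl(\V(m)+m^2\bigr)-m^2=\V(m).
\]
Hence $\Psi\circ\Phi=\mathrm{id}_{\mathcal{V}_{\infty}}$ and $\Phi\circ\Psi=\mathrm{id}_{\mathcal{V}}$, so $\Phi$ is a bijection with two-sided inverse $\Psi$, and in particular it is both injective and surjective.

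The displayed computation is trivial algebra, so there is no analytic obstacle in the corollary itself; all the difficulty has already been absorbed into Theorem~\ref{Prop:operations}. The only point worth stressing is that the statement asserts a bijection between \emph{sets of functions}, so the real content is the class membership rather than the arithmetic: adding and subtracting $m^2$ are obviously inverse operations on the ambient space of real-analytic functions, but without parts~\eqref{P:V-z^2} and~\eqref{P:V+z^2} one could not conclude that the images remain inside $\mathcal{V}$ and $\mathcal{V}_{\infty}$ respectively. I would therefore present the corollary as precisely the assertion that these two membership statements are complementary, with injectivity and surjectivity both following from the single fact that $\Phi$ and $\Psi$ are mutually inverse set maps.
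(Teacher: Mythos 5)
Your proof is correct and is exactly the argument the paper intends (the corollary is stated without proof as an immediate consequence of Theorem \ref{Prop:operations}): parts \eqref{P:V-z^2} and \eqref{P:V+z^2} give well-definedness of the two maps between the stated classes, and the trivial algebraic cancellation shows they are mutually inverse. Nothing further is needed.
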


Next, we  describe  %
the  class of cubic variance functions. %
\begin{theorem}\label{prop:Wojtek} Fix $a,b,c\in\mathbb{R}$. A cubic function $\V(m)=1+am+bm^2 +cm^3$
 is in  $\mathcal{V}$ if and only if $(b+1)^3\geq 27 c^2$.
Furthermore,  $\V$ is in $\mathcal{V}_\infty$ if and only if $b^3\geq 27 c^2$.
\end{theorem}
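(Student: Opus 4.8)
The plan is to translate membership in $\mathcal V$ into a condition on the branch points of an algebraic function attached to the Cauchy--Stieltjes transform $G_\nu(z)=\int \nu(dx)/(z-x)$ of the generating measure, and then to read off the stated inequality as a cubic discriminant. I begin with three reductions. Since the criterion $(b+1)^3\ge 27c^2$ does not involve $a$, and since by Theorem~\ref{Prop:operations}\eqref{P-W-2} adding or subtracting $am$ leaves $\mathcal V$ (resp.\ $\mathcal V_\infty$) invariant, I may assume $a=0$; replacing $\nu$ by its reflection under $x\mapsto -x$ replaces $\V(m)$ by $\V(-m)=1+bm^2-cm^3$, so I may also assume $c\ge 0$. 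Finally, the $\mathcal V_\infty$ assertion is a formal consequence of the $\mathcal V$ assertion: by the Corollary the map $\V(m)\mapsto\V(m)-m^2$ is a bijection of $\mathcal V_\infty$ onto $\mathcal V$ and replaces $b$ by $b-1$, so $\V\in\mathcal V_\infty$ iff $1+am+(b-1)m^2+cm^3\in\mathcal V$ iff $b^3\ge 27c^2$. It therefore suffices to prove the first statement.

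For the $\mathcal V$-statement I use the parametrization of $G_\nu$ by the mean. From \eqref{mQ}, \eqref{m_0}, \eqref{QVar} and the structure of the CSK family one obtains, for $m$ in the domain of means,
\begin{equation}\label{eq:param}
z=m+\frac{\V(m)}{m},\qquad G_\nu(z)=\frac{m}{\V(m)},
\end{equation}
equivalently $m=z-1/G_\nu(z)$ and $\V(m)=m\,(z-m)$. With $a=0$ the first relation becomes the Laurent polynomial
\begin{equation}\label{eq:zofm}
z=z(m)=c\,m^2+(1+b)\,m+\frac1m .
\end{equation}
As $m$ runs over the real axis, $z(m)$ sweeps the real points lying \emph{outside} $\operatorname{supp}\nu$, while the endpoints of the support are the critical values of $z(m)$, i.e.\ the images of the real zeros of
\begin{equation}\label{eq:pcrit}
z'(m)=0\quad\Longleftrightarrow\quad p(m):=2c\,m^3+(1+b)\,m^2-1=0 .
\end{equation}

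The crux is to show that a compactly supported $\nu$ realizing \eqref{eq:param} exists \emph{iff} $p$ has all three roots real. A direct computation gives $\operatorname{disc}(p)=4\big[(1+b)^3-27c^2\big]$, so ``all roots real'' is exactly $(1+b)^3\ge 27c^2$. For necessity I argue that if $\nu$ exists then, since $z(m)\to+\infty$ as $m\to0^+$ and $z(m)\to-\infty$ as $m\to0^-$ while $\operatorname{supp}\nu$ is bounded, $z(m)$ must turn around at an interior critical point on each of $(0,\infty)$ and $(-\infty,0)$, forcing $p$ to have both a positive and a negative real root; since (for $c>0$) the product of the roots of $p$ equals $1/(2c)>0$ while the sum of pairwise products is $0$, a lone real root is always positive, so the existence of a negative root already forces three real roots (the case $c=0$ being the known quadratic one). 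For sufficiency, when $(1+b)^3\ge 27c^2$ I read off a candidate $\nu$ from the boundary values of $G_\nu$: on the support the cubic $c\,m^3+(1+b)\,m^2-zm+1=0$ in $m$ has a conjugate pair of roots, and $-\tfrac1\pi\operatorname{Im}G_\nu(x+i0)$, together with possible atoms at the real zeros of $\V(m)$, produces $\nu$; one then verifies via \eqref{eq:param} and the expansion $G_\nu(z)=1/z+0/z^2+1/z^3+\cdots$ that it has mass $1$, mean $0$ and variance $1$, and that its variance function is indeed $\V$.

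The main obstacle is this last verification of \emph{positivity} and of the correct single-interval support structure, together with the degenerate boundary case $(1+b)^3=27c^2$ (a double root of $p$), where the support may close up or an atom may appear and where one must exhibit a genuine nonnegative measure rather than a formal solution of \eqref{eq:param}. Concretely, this amounts to a careful sign analysis of the algebraic function $m(z)$ defined by \eqref{eq:zofm} across its branch points, and it is precisely here that the normalization $c\ge 0$ and the exact location of the three roots of $p$ (one positive, two negative) are used.
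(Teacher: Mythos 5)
Your reductions are sound: eliminating $a$ via Theorem \ref{Prop:operations}\eqref{P-W-2}, assuming $c\ge 0$ by reflection, deriving the $\mathcal{V}_\infty$ statement from the $\mathcal{V}$ statement via the bijection $\V(m)\mapsto\V(m)-m^2$, and the discriminant computation are all correct (the paper uses the same reductions, in the opposite order). But both halves of your core argument have genuine gaps. For necessity, the assertion that $z(m)$ ``must turn around at an interior critical point on each of $(0,\infty)$ and $(-\infty,0)$'' is unjustified and is false as a general principle. The parametrization $z=m+\V(m)/m$, $G_\nu(z)=m/\V(m)$ holds only for $m$ in the domain of means $(m_-,m_+)$; outside that interval the Laurent polynomial $z(m)$ bears no relation to $\nu$, so boundedness of the support forces nothing. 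On $(m_-,m_+)$ the map $m\mapsto z(m)$ is the inverse of $k(z)=z-F_\nu(z)$, which is monotone on each component of $\RR\setminus\operatorname{supp}\nu$; the derivative $z'(m_\pm)=1/k'$ vanishes only when $F_\nu'$ blows up at the support edge. When it does not --- typically when the edge carries an atom --- there is no turning point: for $\nu=\tfrac12(\delta_{-1}+\delta_1)$ one has $\V(m)=1-m^2$ and $z(m)=1/m$, with no critical points whatsoever, yet $\V\in\mathcal{V}$. You would need to rule out that a cubic $\V$ with $c>0$ and $(1+b)^3<27c^2$ is realized by a measure of this atomic-edge type, and your argument does not. (The paper's necessity proof avoids this entirely: it reduces, via the algebraic operations of Theorem \ref{Prop:operations}, to the question of when $z+cz^2$ is an $r$-transform and invokes the Chistyakov--G\"otze characterization.)

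For sufficiency you produce only a \emph{candidate} $\nu$ by Stieltjes inversion and yourself flag the positivity and support-structure verification as ``the main obstacle.'' That verification \emph{is} the sufficiency direction; nothing in your sketch addresses it, and the boundary case $(1+b)^3=27c^2$ and possible atoms make a direct sign analysis delicate. The paper sidesteps inversion altogether by constructing the measure: $(1+cz)^3=(1+z)/S_\mu(z)$ for a free multiplicative convolution power of a Marchenko--Pastur law (Lemmas \ref{L:V2S}--\ref{L:V0}), after which the $am$ and $bm^2$ terms are absorbed using Theorem \ref{Prop:operations}. As written, neither direction of your argument is complete.
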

The proof of this theorem appears in Section \ref{ProofofT1.2}.

Our final result  relates polynomial variance functions for a \CSK family to generalized orthogonality. Suppose $\{P_n(x):
n=0,1,2\dots\}$ is a family of real polynomials, indexed by their degree $n$ with $P_0(x)=1$; it is
sometimes convenient to set $P_k(x)=0$ for $k<0$.

There is a substantial literature on generalized orthogonality and finite-step recursions for polynomials.
We  introduce  the following generalized orthogonality condition.

\begin{definition}\label{Def-d-orth}   Fix $d\in\NN$ and a probability measure $\nu$ with moments of all orders.
 We say that polynomials  $\{P_n\}$ are \textit{$(\nu;d)$-orthogonal}
 if $\int P_n(x) \nu(dx)=0$ for   all $n\geq 1$, and
$$\int P_n(x)P_k(x)\nu(dx)=0 \mbox{ for   all $n\geq 2+(k-1)d$,\quad $k=1,2\dots$.}$$
\end{definition}

  It is clear that  for measures with infinite support,  $(\nu;1)$-orthogonality   is just the standard
orthogonality.
 For $d=2$, we recover  \cite[Definition 3.1]{hassairi2004characterization}.
 The concept of
$d$-orthogonality introduced in \cite{Iseghem1987approx}  is different  as even for $d=1$ it has no
 positivity requirements for the functional/measure.  When $d>2$, condition of  pseudo-orthogonality in
\cite{kokonendji2005characterizations,kokonendji2005d} is also different.
It is somewhat interesting to note that various concepts of generalized orthogonality are related to $(d+2)$-step recursions for the polynomials,
so the distinctions
sometimes rely on minute technicalities, see the paragraph above Corollary \ref{Cor-favard3}.

The following result  is  a generalization of \cite[Theorem 3.2]{Fakhfakh2017} to $d>1$, and   a \CSK-version of \cite[Theorem
3.1]{hassairi2004characterization} when $d=2$.
\begin{theorem}\label{T:poly}
Suppose that $\V$ is a variance function of a \CSK family generated by
 a non-degenerate compactly supported probability measure $\nu$  with mean $0$ and variance $1$.
Consider the family of polynomials $\{P_n(x)\}$ with generating function
  \begin{equation}\label{f2P}
f(x,m)=\sum_{n=0}^\infty  P_n(x) m^n,
\end{equation} where $f(x,m)$ is given by \eqref{f(x,m)}.
Then the following statements are equivalent:
\begin{enumerate}
  \item\label{i} $\V(m)=1+\sum_{k=1}^{d+1}a_k m^k$ is a polynomial of degree  at most $d+1$;
\item\label{iv} There exist constants $\{b_k:k=1,\dots, d+1\}$   such that polynomials $\{P_n\}$ satisfy
 recursion
\begin{equation}
  \label{CCRd}
  x P_n(x)= P_{n+1}(x)+\sum_{k=1}^{(d+1)\wedge n}b_k P_{n+1-k}(x),\quad n\geq 1
\end{equation}
 with   initial conditions $P_0(x)=1$, $P_1(x)=x$.

  \item\label{ii} Polynomials $\{P_n(x)\}$ are $(\nu;d)$-orthogonal.
 \end{enumerate}
\end{theorem}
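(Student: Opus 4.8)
The plan is to funnel all three statements through the generating function \eqref{f2P}. First I would expand $\V(m)=1+\sum_{k\ge1}a_km^k$ (real-analytic because $\nu$ has compact support) and substitute $f(x,m)=\sum_nP_n(x)m^n$ into the equivalent form $\V(m)\bigl(f(x,m)-1\bigr)=m(x-m)f(x,m)$ of \eqref{DDf}. Comparing the coefficient of $m^{n+1}$ gives, for $n\ge1$,
\begin{equation*}
xP_n=P_{n+1}+P_{n-1}+\sum_{k=1}^{n}a_kP_{n+1-k},\qquad P_0=1,\ P_1=x,
\end{equation*}
which for $n\ge2$ collapses to $xP_n=P_{n+1}+\sum_{k=1}^{n}b_kP_{n+1-k}$ with the \emph{$n$-independent} coefficients $b_1=a_1$, $b_2=a_2+1$, $b_k=a_k$ $(k\ge3)$, the single exceptional instance being $xP_1=P_2+b_1P_1+P_0$. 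Integrating $f=\sum_nP_nm^n$ against $\nu$ and using that each $Q_m$ is a probability measure, so $\int f(x,m)\,\nu(dx)=1$, yields $\int P_0\,d\nu=1$ and $\int P_n\,d\nu=0$ for $n\ge1$; this is the first half of Definition \ref{Def-d-orth} and holds unconditionally. Since $b_k=a_k$ for $k\ge3$, the recursion carries at most $d+1$ lower terms for every $n$ (i.e. reduces to \eqref{CCRd}) if and only if $a_k=0$ for all $k\ge d+2$, i.e. if and only if $\deg\V\le d+1$; this is the equivalence (i)$\Leftrightarrow$(iv).

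For (iv)$\Rightarrow$(ii) I would prove by strong induction on $k\ge1$ that $\int P_nP_k\,d\nu=0$ whenever $n\ge2+(k-1)d$, using only the truncated recursion and $\int P_i\,d\nu=0$ $(i\ge1)$. The base case $k=1$ reduces to $\int xP_n\,d\nu=0$ for $n\ge2$, which holds because each polynomial on the right-hand side of $xP_n=P_{n+1}+\sum_{k\le(d+1)\wedge n}b_kP_{n+1-k}$ carries an index $\ge1$; the case $k=2$ is similar and is where the exceptional term $P_0$ in $P_2=xP_1-b_1P_1-P_0$ is absorbed. For $k\ge3$ I would write $P_k=xP_{k-1}-\sum_jb_jP_{k-j}$, integrate against $P_n$, and push the factor $x$ back onto $P_n$ through the recursion. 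Every resulting term is an inner product $\int P_{n'}P_{k-1}\,d\nu$ with $n'\ge n-d$ or an inner product $\int P_nP_{k-j}\,d\nu$; since $n\ge2+(k-1)d$ forces both $n-d\ge2+(k-2)d$ and $n\ge2+(k-j-1)d$, the inductive hypothesis kills all of them. The extremal term, coming from the lowest polynomial $P_{n-d}$, is exactly what produces the sharp threshold.

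The crux is the converse (ii)$\Rightarrow$(i), where I would drop the recursion in favour of the two-point kernel. Writing $f(x,m)=c(m)/(z(m)-x)$ with $c(m)=\V(m)/m$ and $z(m)=\V(m)/m+m$ (for $m\ne0$), a partial-fraction computation together with the defining relation $c(m)G_\nu(z(m))=\int f(x,m)\,\nu(dx)=1$ (where $G_\nu$ is the Cauchy transform of $\nu$) collapses the double integral to
\begin{equation*}
\Phi(s,t):=\int f(x,s)f(x,t)\,\nu(dx)=\frac{c(t)-c(s)}{z(t)-z(s)}=\frac{s\V(t)-t\V(s)}{sW(t)-tW(s)},\qquad W(m):=\V(m)+m^2.
\end{equation*}
Thus $\Phi=\sum_{n,m}u_{n,m}s^nt^m$ with $u_{n,m}=\int P_nP_m\,d\nu$, and $\Phi\cdot\bigl(sW(t)-tW(s)\bigr)=s\V(t)-t\V(s)$. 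The right-hand side is supported on the cross $\{\min(n,m)\le1\}$, so its $s^nt^2$-coefficient vanishes for $n\ge2$; writing $W(m)=1+\sum_lb_lm^l$ with the same $b_l$ as above and reading off this coefficient gives
\begin{equation*}
u_{n-1,2}+b_1u_{n-1,1}+b_2u_{n-1,0}-b_{n-1}=0,\qquad n\ge2.
\end{equation*}
Assuming (ii), the first three terms vanish as soon as $n-1\ge2+d$, so $b_{n-1}=0$ for every $n\ge d+3$; hence $a_k=b_k=0$ for $k\ge d+2$ and $\deg\V\le d+1$. This closes the cycle.

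The main obstacle is the evaluation of $\Phi(s,t)$ in closed form: it rests on the Cauchy-transform identity $c(m)G_\nu(z(m))=1$ and on enough analyticity---guaranteed by the compact support of $\nu$---to legitimize the partial-fraction manipulation as an identity of power series convergent near $(s,t)=(0,0)$. Once $\Phi$ is available, the extraction of the $b_l$ is a single coefficient comparison and the converse falls out cleanly. A secondary but genuine technical point is the index bookkeeping in the induction for (iv)$\Rightarrow$(ii), where one must track the exceptional first step $xP_1=P_2+b_1P_1+P_0$: it is the sole route by which $P_0$ enters the hierarchy, hence the only source of nonvanishing inner products $\int P_nP_k\,d\nu$.
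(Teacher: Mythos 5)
Your proof is correct, and two of its three links coincide with the paper's argument: the equivalence (i)$\Leftrightarrow$(iv) via the coefficient identification $b_1=a_1$, $b_2=a_2+1$, $b_k=a_k$ for $k\ge3$ is exactly the content of Proposition~\ref{Lem-intP} together with \eqref{rec2V}, and your strong induction for (iv)$\Rightarrow$(ii) is the paper's Proposition~\ref{T:V2d}(iii) with the polynomials $P_k$ playing the role of the monomials $x^k$ (a cosmetic change, since $\{P_j:j\le k\}$ and $\{x^j:j\le k\}$ span the same subspace). The genuine divergence is in (ii)$\Rightarrow$(i). The paper tests against the single polynomial $P_2$: the pointwise identity \eqref{QVar} gives $\int P_2(x)f(x,m)\,\nu(dx)=\V(m)+m^2-\V'(0)m-1$, while term-by-term integration under $\int P_2P_k\,d\nu=0$ for $k\ge d+2$ truncates the same quantity to a polynomial of degree $d+1$; only the $k=2$ row of the orthogonality is used. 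You instead derive the closed form $\Phi(s,t)=\int f(x,s)f(x,t)\,\nu(dx)=\bigl(s\V(t)-t\V(s)\bigr)/\bigl(sW(t)-tW(s)\bigr)$ from a partial-fraction computation and the Cauchy-transform identity \eqref{F2V}, then extract the $s^nt^2$ coefficients. This is heavier machinery --- it needs the uniform bound $\sup_{x}|P_n(x)|\le C^n$ on the support of $\nu$ from Proposition~\ref{Lem-intP}(i) to legitimize the double series, and the injectivity of $z(m)=m+\V(m)/m$ near $0$ for the partial fractions --- but it buys the full generating function of the Gram matrix $\bigl(\int P_nP_k\,d\nu\bigr)$, not just the one column the proof requires. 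Two small presentational points, neither of which affects correctness: your displayed identity $u_{n-1,2}+b_1u_{n-1,1}+b_2u_{n-1,0}-b_{n-1}=0$ is not the raw $s^nt^2$ coefficient of $\Phi\cdot\bigl(sW(t)-tW(s)\bigr)$; the raw coefficient also carries $-u_{n,1}-\sum_{l}b_lu_{n-l,1}$, which collapses to $-b_{n-1}$ only after invoking $u_{j,1}=0$ for $j\ne1$, i.e.\ part of hypothesis (ii). And your (i)$\Leftrightarrow$(iv) step tacitly uses that $P_0,\dots,P_{n+1}$ are linearly independent (true, since they are monic of distinct degrees) to conclude that satisfying the truncated recursion \eqref{CCRd} forces $a_k=0$ for all $k\ge d+2$.
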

 Note that
the upper limit of the sum on the right hand side of  \eqref{CCRd} is $d+1$ under the   convention that $P_{k}(x)=0$ for $k<0$, and
 that Proposition \ref{P:Wojtek-1} below provides examples of polynomial variance functions of arbitrarily high degree. The proof of Theorem \ref{T:poly} appears in Section \ref{ProofofT1.5}.

The paper is organized as follows.
In Section \ref{Sec:Var} we introduce  free probability notation and use it to prove  the first two theorems. %
We also include
some additional examples of variance functions.
Section \ref{Sec:proofs}  is independent of Section \ref{Sec:Var}  and discusses results on polynomials that imply Theorem \ref{T:poly}. %
In Section~\ref{Sec:conclusions} we provide a combinatorial example
involving sequences A001764, A098746 and A106228 from
 OEIS~\cite{sloane2003line}.
We also discuss generating functions and sharpness of some results.

\section{Variance functions and free probability}\label{Sec:Var}

Recall that a \textit{dilation} $D_t(\nu)$ of a probability measure $\nu$ by a non-zero real number $t$ is a measure $\mu(U)=\nu(U/t)$. $D_{-1}(\nu)$ is called the \textit{reflection of $\nu$}.
In the language of probability theory, dilation changes the law of random variable $X$ to the law of $t X$.

\subsection{Notation from free probability}

For a probability measure $\mu$ on $\mathbb{R}$ we put:
$$
M_{\mu}(z):=\int \frac{\mu(dx)}{1-xz},\quad
G_\mu(z):=\int \frac{\mu(dx)}{z-x},\quad
F_\mu(z):=1/G_\mu(z),
$$
$M_{\mu}$ is called the \textit{moment generating function},
$G_\mu(z)$ is the \textit{Cauchy-Stieltjes transform}.
The \textit{free $R$-transform} can be defined by the equation
\begin{equation}\label{R2M}
R_\mu(z M_\mu(z))+1=M_\mu(z).
\end{equation}
The coefficients $\kappa_n(\mu)$ in the Taylor expansion $R_\mu(z)=\sum_{n=1}^\infty\kappa_n(\mu) z^n$
are called \textit{free cumulants}.
We will also use
\begin{equation}
r_\mu(z):=R_\mu(z)/z.
\end{equation}
Equation \eqref{R2M} can also be written as
\begin{equation}\label{littlertransformrelation}
z M_{\mu}(z) r_{\mu}(zM_{\mu}(z))=M_{\mu}(z)-1.
\end{equation}
Note that for the dilated measure we have
\begin{equation}\label{dilationformulas}
M_{D_t(\mu)}(z)=M_{\mu}(tz),\quad
R_{D_t(\mu)}(z)=R_{\mu}(tz),\quad
r_{D_t(\mu)}(z)=t r_{\mu}(tz).
\end{equation}
If $|z|\ne0$ is small enough then
\begin{equation}   \label{FGr} G_{\mu}\left(r_{\mu}(z)+\frac{1}{z}\right)=z,\qquad
F_{\mu}\left(r_{\mu}(z)+\frac{1}{z}\right)=\frac{1}{z}.
\end{equation}

The sum of two $R$-transforms is an $R$-transform and  defines
the \textit{free additive convolution} of measures $\mu\boxplus \nu$   by
$R_{\mu\boxplus \nu}(z)=R_\mu(z)+R_\nu(z)$. For any real $t\geq 1$,
it is known that $t R_\mu(z)$ is an $R$-transform and defines
\textit{additive free convolution power} $\mu^{\boxplus t}$ (see \cite{Nica-Speicher}).

Probability measure $\mu$ is called \textit{$\boxplus$-infinitely  divisible}
if its free convolution power $\mu^{\boxplus t}$ is well defined for all real $t>0$.
If $\lambda\ne0$ then $\mu$ is $\boxplus$-infinitely  divisible
if and only if $D_{\lambda}(\mu)$ is $\boxplus$-infinitely  divisible.

It is known, see   \cite{bercovici2000free,Hiai-Petz00}
that a compactly supported $\mu$ with the first moment $m_0=\int x\mu(dx)$
is $\boxplus$-infinitely divisible if and only if there exists a compactly supported
finite measure $\omega$ on $\RR$ such that $\omega(\RR)=\int (x-m_0)^2 \mu(dx)$ and
\begin{equation*}
  r_\mu(z)=m_0+ z\int \frac{\omega(dx)}{1-z x}.
\end{equation*}
In particular, if $\nu$ is a generating measure of a \CSK family, then under our moment assumptions,   $\nu$ is free-infinitely
divisible if an only if there is a compactly supported probability measure $\omega$ such that
\begin{equation}
  \label{M2r}
  r_\nu(z)=zM_\omega(z).
\end{equation}

 For a probability measure $\mu\ne \delta_0$ with support in $[0,\infty)$, the \textit{$S$-transform}
  is defined by
 \begin{equation}\label{M2S}
   R_\mu(z S_\mu(z))=z\quad \mbox{or}\quad
   M_\mu\left(\frac{z}{1+z}S_\mu(z)\right)=1+z,
 \end{equation}
see e.g.  \cite[(5)]{HM-2011}. Note that in particular $\int x \mu(dx)=1/S_\mu(0)$.

The product of $S$-transforms is an $S$-transform and defines
the \textit{multiplicative free convolution} $\mu_1\boxtimes\mu_2$ by
$S_{\mu_1\boxtimes\mu_2}(z)=S_{\mu_1}(z) S_{\mu_2}(z)$.
\textit{Multiplicative free convolution powers} $\mu^{\boxtimes p}$ are defined at least for all $p\geq 1$ (see \cite[Theorem 2.17]{belinschi2006complex})
by $S_{\mu^{\boxtimes p}}(z)=S_\mu(z)^p$.

The \textit{Marchenko-Pastur measure} with parameter $\lambda>0$:
$$
\pi_\la(dx)=(1-\la)^+\delta_0+\frac{\sqrt{4 \la - (x-1-\la)^2}}{2\pi x}1_{x\in [(1-\sqrt{\la})^2, (1+\sqrt{\la})^2]}dx.
$$
plays in free probability the role of the Poisson distribution, see~\cite{nicaspeicher2006}.
Since $S_{\pi_\la}(z)=\frac{1}{\la+z}$, we have %
\begin{equation*}
  S_{D_{b}(\pi_{1/b})}(z)=\frac{1}{1+b z}.
\end{equation*}

It is known, see
\cite[Section 2]{HM-2011}, \cite[Theorem 1.2]{arizmendi2016classical} and \cite{banica2011free,mlotkowski2010fuss}, that
if $p>0$, $b>0$ then $\mu=\left(D_{b}(\pi_{1/b})\right)^{\boxtimes p}$
 exists if and only if $\max\{p,1/b\}\geq 1$.
 This measure $\mu$ has compact support in $[0,\infty)$ and its $S$-transform equals:
\begin{equation}\label{Poisson-S}
  S_{\mu}(z)=\frac{1}{(1+b z)^p}.
\end{equation}

For additional details and background on free probability we refer to
\cite{nicaspeicher2006,voiculescu1992free}.
\subsection{Formulas for variance functions}
A variance function $\V$ of a \CSK family generated by a compactly supported  centered probability measure
$\nu\ne \delta_0$ is real-analytic at $m=0$, so it extends to the analytic mapping
 $z\mapsto \V(z)$  on an open disk near $z=0$.
Our assumptions
  on  the  first two moments of $\nu$ imply that $r_\nu(z)=z+\kappa_3(\nu)z^2+\dots$  is invertible near $z=0$ and   its composition inverse is
${z}/{\V_\nu(z)}$ (\cite[Theorem 3.3]{Bryc-06-08}),
so that
\begin{equation}\label{r-V}
  r_{\nu}(z)=z \V_{\nu}(r_{\nu}(z)).
\end{equation}
Replacing $z$ by
$ z/\V(z)$, from equation \eqref{FGr} we get
\begin{equation}\label{F2V}
  F_\nu\left(z+\frac{\V_\nu(z)}{z}\right)=\frac{\V_\nu(z)}{z}.
\end{equation}
(This was first noted in \cite[(4.4)]{Bryc-Ismail-05} and exploited  in
\cite{Bryc-06-08,Bryc-Hassairi-09,Bryc-Raouf-Hassairi-14}.)

The following result is known but we prove it for completeness.
\begin{lemma}\label{P-W-0}
  If $z\mapsto \V(z)$ is a variance function then so is $z\mapsto \V(-z)$.
\end{lemma}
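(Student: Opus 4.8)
The plan is to exhibit $z\mapsto\V(-z)$ as the variance function of the measure obtained from $\nu$ by reflection. Suppose $\V=\V_\nu$ for a compactly supported centered probability measure $\nu\ne\delta_0$ of variance $1$, and set $\mu:=D_{-1}(\nu)$, the reflection of $\nu$. Since reflection sends a random variable $X$ to $-X$, the measure $\mu$ is again compactly supported and non-degenerate, with $\int x\,\mu(dx)=-\int x\,\nu(dx)=0$ and $\int x^2\,\mu(dx)=\int x^2\,\nu(dx)=1$; hence $\mu$ generates a \CSK family with a well-defined variance function $\V_\mu$. It therefore suffices to prove that $\V_\mu(m)=\V_\nu(-m)$.

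To compute $\V_\mu$ I would pass to the $r$-transform. The dilation formula \eqref{dilationformulas} with $t=-1$ gives
\begin{equation*}
  r_\mu(z)=-\,r_\nu(-z).
\end{equation*}
Writing $w:=r_\mu(z)=-r_\nu(-z)$, I would combine the defining relation \eqref{r-V} for $\mu$, namely $r_\mu(z)=z\,\V_\mu(r_\mu(z))$, with the same relation for $\nu$ evaluated at $-z$, namely $r_\nu(-z)=-z\,\V_\nu(r_\nu(-z))$, and solve each for the relevant variance-function value. From the first, $\V_\mu(w)=w/z$; from the second, $\V_\nu(r_\nu(-z))=r_\nu(-z)/(-z)=w/z$ as well. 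Since $r_\nu(-z)=-w$, the second identity reads $\V_\nu(-w)=w/z$, so that $\V_\mu(w)=\V_\nu(-w)$.

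Finally, as $z$ ranges over a punctured neighborhood of $0$ the value $w=r_\mu(z)$ ranges over a punctured neighborhood of $0$ as well, since both $r$-transforms are invertible near the origin by the normalization $r_\nu(z)=z+\kappa_3(\nu)z^2+\dots$ recalled just before \eqref{r-V}. Thus $\V_\mu(w)=\V_\nu(-w)$ holds on a neighborhood of $0$ and extends by analyticity, showing that $z\mapsto\V(-z)=\V_\nu(-z)=\V_\mu(z)$ is a variance function. The only point requiring genuine care is the sign bookkeeping in the dilation formula; everything else is a direct substitution.
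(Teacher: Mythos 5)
Your proof is correct and follows exactly the paper's own argument: reflect $\nu$ via $D_{-1}$, apply the dilation formula \eqref{dilationformulas} to get $r_{D_{-1}(\nu)}(z)=-r_\nu(-z)$, and conclude from \eqref{r-V} that $\V_{D_{-1}(\nu)}(z)=\V_\nu(-z)$. The extra sign bookkeeping you carry out is just an expanded version of the paper's one-line computation.
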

\begin{proof}
  Put $\nu_{-}:=D_{-1}(\nu)$. Then, by (\ref{dilationformulas}), $r_{\nu_{-}}(z)=-r_{\nu}(-z)$
and from (\ref{r-V}) we have $\V_{\nu_{-}}(z)=\V_{\nu}(-z)$.
\end{proof}
The following relates   class $\mathcal{V}_{\infty}$  of variance functions to free probability.
\begin{proposition}
  \label{P:W-3a}
If $\V=\V_{\nu}$ then  $\nu^{\boxplus
\lambda^2}$ exists if and only if $\V(z/\la)\in\mathcal{V}$. In particular,    $\mathcal{V}_{\infty}$ is the class of those $\V_{\nu}\in\mathcal{V}$ that $\nu$
is $\boxplus$-infinitely divisible.

\end{proposition}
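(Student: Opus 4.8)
The plan is to reduce the claim about the convolution power $\nu^{\boxplus\lambda^2}$ to a statement about the $R$-transform (equivalently $r$-transform), and then to translate the condition ``$r$-transform is a valid $r$-transform of a generating measure'' into the condition ``$\V(z/\lambda)\in\mathcal{V}$'' using the fundamental relation \eqref{r-V} between a generating measure and its variance function. The key algebraic identity to exploit is that $r_{\nu}(z)=z\,\V_\nu(r_\nu(z))$ characterizes $\V_\nu$ as the inverse-function data of $r_\nu$, so dilating the argument of $\V$ corresponds to a dilation of $r_\nu$.

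First I would fix $\lambda\neq 0$ and compute what $\V(z/\lambda)$ is the variance function of, \emph{if} it is one at all. Since $r_{\nu}(z)=z\,\V_\nu(r_\nu(z))$ has composition inverse $z\mapsto z/\V_\nu(z)$, I would check that replacing $\V_\nu(z)$ by $W(z):=\V_\nu(z/\lambda)$ corresponds, at the level of $r$-transforms, to the candidate $r(z):=\lambda\, r_\nu(z)$, which by \eqref{dilationformulas} is exactly $r_{D_\lambda(\nu)}$ evaluated suitably; more precisely I would identify the generating measure behind $\V(z/\lambda)$ as a dilation of $\nu^{\boxplus\lambda^2}$. The clean way to organize this is to recall that $R_{\nu^{\boxplus\lambda^2}}(z)=\lambda^2 R_\nu(z)$, so that $r_{\nu^{\boxplus\lambda^2}}(z)=\lambda^2 r_\nu(z)$, and then use \eqref{r-V} to read off its variance function. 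A direct substitution shows that the variance function of $\nu^{\boxplus\lambda^2}$, after the mean/variance normalization built into the class $\mathcal{V}$, is precisely $m\mapsto \V(m/\lambda)$ (one checks the normalization $\V(0)=1$ is preserved and that the measure is centered). Thus $\V(z/\lambda)\in\mathcal{V}$ is equivalent to $\nu^{\boxplus\lambda^2}$ being a well-defined compactly supported centered probability measure.

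For the ``only if'' and ``if'' directions I would argue as follows. If $\nu^{\boxplus\lambda^2}$ exists, the computation above produces a bona fide generating measure (compactly supported, centered, variance normalized) whose variance function is $\V(\cdot/\lambda)$, giving membership in $\mathcal{V}$. Conversely, if $\V(z/\lambda)\in\mathcal{V}$, there is a generating measure $\mu$ with $\V_\mu(z)=\V(z/\lambda)$; running the $r$-transform identity \eqref{r-V} backwards shows $r_\mu(z)=\lambda^2 r_\nu(z)$, hence $R_\mu=\lambda^2 R_\nu=R_{\nu^{\boxplus\lambda^2}}$, so $\nu^{\boxplus\lambda^2}$ exists (and equals a dilation/translation of $\mu$). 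For the final sentence about $\mathcal{V}_\infty$, I would simply note that by definition $\V\in\mathcal{V}_\infty$ means $\V(cm)\in\mathcal{V}$ for \emph{every} real $c$; applying the equivalence just proved with $c=1/\lambda$ for all $\lambda$ shows this is the same as $\nu^{\boxplus t}$ existing for all $t=\lambda^2>0$, i.e.\ $\nu$ being $\boxplus$-infinitely divisible, modulo the reflection symmetry handled by Lemma~\ref{P-W-0} to cover negative $c$.

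The main obstacle I anticipate is bookkeeping the normalizations and the mean. The convolution power $\nu^{\boxplus\lambda^2}$ scales the variance by $\lambda^2$, so to land in the normalized class $\mathcal{V}$ (variance $1$, $\V(0)=1$) one must compose with an appropriate dilation $D_{1/\lambda}$, and I must verify that the dilation and convolution-power operations interact with \eqref{r-V} to give exactly the argument rescaling $z\mapsto z/\lambda$ rather than some other factor; the formulas \eqref{dilationformulas} and the scaling $R_{\nu^{\boxplus t}}=t\,R_\nu$ make this a finite check but it is the step most prone to sign and power errors. A secondary subtlety is the range of validity of free convolution powers: $\mu^{\boxplus t}$ is automatically defined for $t\geq 1$, so for $|\lambda|\geq 1$ the ``if'' direction is immediate, whereas for $|\lambda|<1$ the existence of $\nu^{\boxplus\lambda^2}$ is a genuine constraint, and I would make sure the equivalence is stated so that both regimes are covered uniformly.
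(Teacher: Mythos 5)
Your proposal follows essentially the same route as the paper's proof: normalize the convolution power by the dilation $D_{1/\lambda}$, use \eqref{dilationformulas} and the scaling $R_{\nu^{\boxplus t}}=tR_\nu$ together with \eqref{r-V} to identify the variance function of $D_{1/\lambda}(\nu^{\boxplus\lambda^2})$ as $z\mapsto\V(z/\lambda)$, reverse the computation for the converse, and invoke Lemma~\ref{P-W-0} for negative dilation parameters in the $\mathcal{V}_\infty$ statement. The only blemish is the displayed relation $r_\mu(z)=\lambda^2 r_\nu(z)$ in your converse step, which should read $r_\mu(z)=\lambda\, r_\nu(z/\lambda)$ (so that $D_\lambda(\mu)=\nu^{\boxplus\lambda^2}$); this is exactly the bookkeeping check you flagged, and it does not affect the validity of the argument.
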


\begin{proof} %
Suppose $\nu^{\boxplus \lambda^2}$ exists and denote $\nu_{\lambda}:=D_{1/\lambda}(\nu^{\boxplus \lambda^2})$.
Then, by (\ref{dilationformulas}), we have $r_{\nu_{\lambda}}(z)=\lambda r_{\nu}(z/\lambda)$ and
$$
\frac{r_{\nu_{\lambda}}(z)}{\V(r_{\nu_{\lambda}}(z)/\lambda )}=
\frac{\lambda r_{\nu}(z/\lambda)}{\V(r_{\nu}(z/\lambda))}
=\lambda\frac{z}{\lambda}=z,
$$
which proves that $\V_{\nu_{\lambda}}(z)=\V(z/\lambda)$.
Conversely, from the first equality, if $\V(z/\lambda)$ is a variance function of some $\nu_\la$ then $r_{\nu_\la}(z)/\la=r_\nu(z/\la)$ so $\nu^{\boxplus \lambda^2} =D_{\lambda}(\nu_{ \lambda})$ exists.

In particular, from Lemma \ref{P-W-0} we see that $\V\in\mathcal{V}_\infty$ if and only if $\nu$ is $\boxplus$-infinitely divisible.
\end{proof}

From \eqref{r-V},  \eqref{M2r} and \eqref{R2M}
we get
the following.
\begin{proposition}\label{P:W-1}
A function $\V(z)$ belongs to $\mathcal{V}_{\infty}$ if and only if there is a compactly supported probability measure
$\omega$ on $\mathbb{R}$ such that $\V(z)=1+R_{\omega}(z)$.
\end{proposition}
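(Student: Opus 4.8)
The plan is to translate membership in $\mathcal{V}_\infty$ into the language of the $R$-transform via Proposition \ref{P:W-3a} and the free-infinite-divisibility characterization recalled just before \eqref{M2r}, and then to match the two sides by a single change of variables. By the last assertion of Proposition \ref{P:W-3a}, $\V\in\mathcal{V}_\infty$ precisely when $\V=\V_\nu$ for a generating measure $\nu$ that is $\boxplus$-infinitely divisible, and by \eqref{M2r} the latter holds exactly when there is a compactly supported probability measure $\omega$ with $r_\nu(z)=zM_\omega(z)$. Thus the whole proposition reduces to the assertion that, for such a pair $(\nu,\omega)$, one has $\V_\nu(z)=1+R_\omega(z)$.

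For the forward implication I would start from the defining relation \eqref{r-V}, written as $\V_\nu(r_\nu(z))=r_\nu(z)/z$. Substituting $r_\nu(z)=zM_\omega(z)$ gives $\V_\nu(r_\nu(z))=M_\omega(z)$, while \eqref{R2M} applied to $\omega$ reads $M_\omega(z)=1+R_\omega(zM_\omega(z))=1+R_\omega(r_\nu(z))$. Comparing the two expressions yields $\V_\nu(w)=1+R_\omega(w)$ along the curve $w=r_\nu(z)$. Since $M_\omega(z)=1+O(z)$, we have $r_\nu(z)=z+O(z^2)$, so $w=r_\nu(z)$ is an invertible analytic germ at the origin whose image fills a full neighborhood of $0$; as both $\V_\nu$ and $R_\omega$ are analytic there, the identity $\V_\nu=1+R_\omega$ holds as an equality of functions near $0$.

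For the converse I would run the same computation backwards. Given a compactly supported probability measure $\omega$, the structural description of $\boxplus$-infinitely divisible measures quoted before \eqref{M2r} (with $m_0=0$ and total mass $\omega(\RR)=1$ playing the role of the variance) produces a compactly supported, centered, variance-one $\boxplus$-infinitely divisible measure $\nu$ whose $r$-transform is exactly $r_\nu(z)=zM_\omega(z)$. Such $\nu$ is non-degenerate (its variance equals $1$) and hence generates a \CSK family with $\V_\nu(0)=1$, so $\V_\nu\in\mathcal{V}$; the computation above then gives $\V_\nu=1+R_\omega=\V$, and Proposition \ref{P:W-3a} places $\V_\nu$ in $\mathcal{V}_\infty$ because $\nu$ is $\boxplus$-infinitely divisible.

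The algebra here is short, so the only delicate point is the passage from an identity along the parametrized curve $w=r_\nu(z)$ to an identity of analytic functions near $0$; this is precisely what forces the observation that $r_\nu$ is a conformal germ at the origin, $r_\nu(z)=z+O(z^2)$. The second place requiring care is the converse, where one must genuinely invoke the free-infinite-divisibility characterization to guarantee that an honest generating measure $\nu$ with the prescribed $r$-transform exists, rather than merely a formal power series.
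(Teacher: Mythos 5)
Your proof is correct and follows exactly the route the paper intends: the paper's entire proof is the one-line remark that the proposition follows from \eqref{r-V}, \eqref{M2r} and \eqref{R2M}, and your argument is precisely the expansion of that remark, combining the change of variables $w=r_\nu(z)$ with the reduction of $\mathcal{V}_\infty$-membership to $\boxplus$-infinite divisibility via Proposition \ref{P:W-3a}. Your added care about $r_\nu$ being an invertible analytic germ at $0$ and about the existence of $\nu$ in the converse direction is a welcome filling-in of details the paper leaves implicit.
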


 We remark that the perturbation theorem
in \cite{bercovici1995superconvergence}    generates  a large number of
implicit examples
of variance functions in $\mathcal{V}_\infty$. In particular,
for every $d\geq 3$ there is a $\delta>0$ such that
$\V(z)=1+z^2+\sum_{k=3}^d c_k z^k$
is  in $\mathcal{V}_\infty$ when  $\max_k|c_k|<\delta$.
Corollary 2.5 in \cite{chistyakov2011characterization} yields explicit
characterization
of such variance functions for $d=4$.

\subsection{Proof of Theorem \ref{Prop:operations}}\label{ProofofT1.3}
We need the  following lemma that we shall use with $\alpha=1$, $\beta=0$.
\begin{lemma}\label{L:inverse}
If $\omega$ is a probability distribution on $\mathbb{R}$, $\alpha>0$, $\beta\in\mathbb{R}$ then there exists a non-degenerate
probability
distribution $\mu$ such that
\begin{equation}\label{lemat}
M_{\mu}(z)=\frac{1}{1-\beta z-\alpha z^2 M_{\omega}(z)}.
\end{equation}

Conversely, if $\mu$ is a probability measure with moments
\[
\int x \mu(dx)=\beta,\quad
\int(x-\beta)^2\mu(dx)=\alpha>0
\]
then there exists a probability measure $\omega$ such that \eqref{lemat}
holds.
\end{lemma}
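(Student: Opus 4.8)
The plan is to translate the algebraic identity \eqref{lemat}, which is stated in terms of moment generating functions, into the language of reciprocal Cauchy transforms, where the correspondence becomes a transparent statement about Pick (Nevanlinna) functions. Substituting $z=1/w$ and using the elementary relations $M_\mu(1/w)=w\,G_\mu(w)$ and $M_\omega(1/w)=w\,G_\omega(w)$, a short computation shows that \eqref{lemat} is equivalent to
\[
G_\mu(w)=\frac{1}{w-\beta-\alpha\,G_\omega(w)},\qquad\text{equivalently}\qquad F_\mu(w)=w-\beta-\alpha\,G_\omega(w).
\]
Thus the whole lemma reduces to the assertion that the map sending $\omega$ to the function $w\mapsto w-\beta-\alpha\,G_\omega(w)$ is a bijection from probability measures $\omega$ onto the reciprocal Cauchy transforms $F_\mu$ of probability measures $\mu$ having mean $\beta$ and variance $\alpha$.

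For the existence direction, given $\omega$ I would set $F(w):=w-\beta-\alpha\,G_\omega(w)$ and verify that $F$ is the reciprocal Cauchy transform of a probability measure. Since $\omega$ is a probability measure, $G_\omega$ maps the upper half-plane $\CC^+$ into the lower half-plane, so $-\alpha\,G_\omega$ maps $\CC^+$ into $\CC^+$, and adding $w-\beta$ keeps $F$ a self-map of $\CC^+$; hence $F$ is a Pick function. From the expansion $G_\omega(w)=1/w+O(1/w^2)$ one gets $F(w)=w-\beta-\alpha/w+O(1/w^2)$, so $F(iy)/(iy)\to 1$ as $y\to\infty$. By the standard characterization of reciprocal Cauchy transforms, there is a (unique) probability measure $\mu$ with $F_\mu=F$; reading off the expansion identifies $\int x\,\mu(dx)=\beta$ and $\int(x-\beta)^2\mu(dx)=\alpha>0$, so $\mu$ is non-degenerate. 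Undoing the substitution $z=1/w$ recovers \eqref{lemat}.

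For the converse direction, the input is a probability measure $\mu$ with mean $\beta$ and variance $\alpha>0$, and I would extract $\omega$ from the Nevanlinna representation of $F_\mu$. Finiteness of the second moment guarantees that $F_\mu$ admits the representation
\[
F_\mu(w)=w-\beta-\int\frac{\rho(dx)}{w-x}
\]
with $\rho$ a \emph{positive finite} measure whose total mass equals the variance, $\rho(\RR)=\alpha$. Setting $\omega:=\rho/\alpha$ then yields a probability measure with $\alpha\,G_\omega(w)=\int \rho(dx)/(w-x)=w-\beta-F_\mu(w)$, which is exactly the required identity; substituting $z=1/w$ once more gives \eqref{lemat}.

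The routine parts are the transform bookkeeping and the moment extraction from the $1/w$-expansion. The place that needs care, and which I regard as the main obstacle, is citing the correct analytic input in each direction: in the existence direction, the characterization of which Pick functions with $F(iy)/(iy)\to1$ arise as reciprocal Cauchy transforms; and in the converse direction, the sharp form of the Nevanlinna representation asserting that finite variance forces the representing measure $\rho$ to be finite with total mass \emph{exactly} equal to $\int(x-\beta)^2\mu(dx)$. Both facts are classical, but it is precisely the equality of masses (rather than a mere inequality) that makes the constant $\alpha$ match on the two sides, so that statement must be invoked in its precise quantitative form.
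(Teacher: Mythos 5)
Your proof is correct and follows essentially the same route as the paper: both reduce \eqref{lemat} to the identity $F_\mu(w)=w-\beta-\alpha\,G_\omega(w)$ and then invoke the classical characterization of reciprocal Cauchy transforms (the paper cites Proposition~5.2 of Bercovici--Voiculescu in both directions, applying it in the converse to $\alpha/(w-\beta-F_\mu(w))$, which is just a reformulation of your Nevanlinna-representation step with $\rho=\alpha\omega$). One minor caveat: for a probability measure $\omega$ with no moment hypotheses one only has $G_\omega(iy)=1/(iy)+o(1/y)$ rather than $O(1/y^2)$, but this weaker estimate already gives $F(iy)/(iy)\to1$ and suffices to read off the first two moments of $\mu$.
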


\begin{proof}
For the $F$-transform of $\mu$ we have $M_{\mu}(z)=1/(zF_{\mu}(1/z))$, so
 relation \eqref{lemat} becomes
\[
F_{\mu}(z)=z-\beta-\frac{\alpha}{F_{\omega}(z)}.
\]
Now it suffices to apply Proposition~5.2 from \cite{bercovici1993free} (see also \cite[Section 3.3]{Hiai-Petz00}).

To prove the converse, we apply Proposition~5.2 from \cite{bercovici1993free} to analytic function $$F(z):= \frac{\alpha}{z-\beta-F_\mu(z)},$$
which becomes the $F$ transform of a probability measure.
To verify the assumptions in \cite{bercovici1993free} we note that since $\mu$ is non-degenerate we have $\Im F_\mu(z)>\Im z$ (see comments below \cite[Proposition 2.1]{maassen1992addition}). So $F$ maps $\CC_+$ into itself. Series expansion at $\infty$ gives $z-\beta-F_\mu(z)=\alpha/z+o(1/z)$ as $|z|\to \infty$.
\end{proof}

\begin{proof}[Proof of Theorem \ref{Prop:operations}]
Statement
\eqref{P-W-3}
follows  from Proposition \ref{P:W-3a}, as free convolution power $\nu^{\boxplus c^2}$ exists for $c\geq 1$.

\eqref{P-W-2}
Let $G(z)=  M_\nu(1/z)/z$ be the Cauchy-Stieltjes transform of $\nu$ and   $F(z)=1/G(z)$. The  continued fraction expansion for $G$ gives
$$
F(z)=z-b_0-\cfrac{c_0}{z-b_1-\cfrac{c_1}{z-b_2-\frac{c_2}{\ddots}}}
$$
where $b_n,c_n$ are the Jacobi coefficients in the three-step recursion for the monic orthogonal polynomials with
respect to measure $\nu$,
$$
x p_n(x)=p_{n+1}(x)+b_n p_n(x)+c_{n-1}p_{n-1}(x),\quad n\geq 0.
$$
(This can be read out from \cite[Section 2.6]{Ism2005}. The recursion and the continued fraction terminate at $c_N=0$ if $\nu$
is purely atomic with
$N+1$ atoms.)

 Define $F_a(z)=F(z-a)+a$. Then $F_a(z)$ has the same continued fraction expansion with the same coefficients $c_n$, the same coefficient $b_0$,  and for $k\ge1$ coefficient $b_k$ is replaced by $b_k+a$.
 Therefore, by Favard's theorem (the usual version, or a finite version when $c_N=0$; the latter can be
 read out from  the first page of \cite[Section 2.5]{Ism2005}) $F_a(z)$ is the inverse of a Cauchy-Stieltjes transform of a probability  measure
 $\nu_a$. The first two moments of $\nu_a$ are not affected by the change of $b_1,b_2,\dots$, so $\nu_a$ has  mean 0 and variance $1$.

Since $F$ is well defined outside of the support of $\nu$, we have $F(x)>0$ for $x>K$ and $F(x)<0$ for $x<-K$.
So $F_a$ also extends to the real axis far away from $0$, and therefore $\nu_a$ has compact support.
 (This fact is sometimes called Krein's theorem \cite{krein1977markov}, see e.g. \cite[Theorem 3.9]{chistyakov2011characterization}.)

Since $F$ satisfies \eqref{F2V}, function $\V_a(z)=\V(z)+az$ satisfies the same identity with $F_a$
in place of $F$, identifying the variance function.

Suppose now that  $\V\in\mathcal{V}_\infty$. Then $\V(cz)$ is a variance function for any real $c$,  so by the previous reasoning with $a$ replaced by $ac$, we see that
$\V(cz)+ac z=\V_a(cz)$ is in $\mathcal{V}$, i.e., $\V_a\in\mathcal{V}_\infty$.

\eqref{P-W-4} We use Proposition \ref{P:W-1}.
If $\V_1(z)=\V_{\nu_1}(z)=1+R_{\omega_1}(z)$, $\V_2(z)=\V_{\nu_2}(z)=1+R_{\omega_2}(z)$
then
\[
\V_1(z)+\V_2(z)-1=1+R_{\omega_1}(z)+R_{\omega_2}(z)=1+R_{\omega_1\boxplus\omega_2}(z)
\]
and similarly
\[
c \V_1(z)-c+1=1+c R_{\omega_1}(z)=1+R_{{\omega_1}^{\boxplus c}}(z).
\]

\eqref{P:V-z^2}
Let $\mathbb{V}_1=\mathbb{V}_{\nu_1}$ and denote $r_1:=r_{\nu_1}$.
Then $r_1(z)=zM_{\omega}(z)$ for some probability measure $\omega$.
Using Lemma \ref{L:inverse}, let $\nu$ be such a probability measure that $M_{\nu}(z)=1/(1-z r_1(z))$.
It is clear  that   $\nu$ has mean zero and variance 1.
Denote $M_{\nu}=M$, $r_{\nu}:=r$ and put $\widetilde{z}:=z M(z)$. Then,
by (\ref{littlertransformrelation}),
\[
z=\frac{\widetilde{z}}{M(z)}=\frac{\widetilde{z}}{\widetilde{z}r(\widetilde{z})+1}
\]
and
\[
r_1(z)=\frac{M(z)-1}{zM(z)}=r(\widetilde{z}).
\]

Applying these identities to the equality
\[
\frac{r_1(z)}{\mathbb{V}_1(r_1(z))}=z
\]
yields
\[
\frac{r(\widetilde{z})}{\mathbb{V}_1(r(\widetilde{z}))}=\frac{\widetilde{z}}{\widetilde{z}r(\widetilde{z})+1}
\]
or equivalently
\[
\frac{r(\widetilde{z})}{\mathbb{V}_1(r(\widetilde{z}))-r(\widetilde{z})^2}=\widetilde{z},
\]
which proves that $\mathbb{V}_{\nu}(z)=\mathbb{V}_1(z)-z^2$.

\eqref{P:V+z^2} Let $\nu$ be the measure corresponding to $\V$. By the converse part of  Lemma \ref{L:inverse},
there exists a compactly supported probability measure $\omega$ such that $M_{\nu}(z)=1/(1-z^2M_{\omega}(z))$. Then  \eqref{M2r} defines
measure $\nu_1$ with  $r_{\nu_1}(z)=zM_{\omega}(z)$ and the relation $\V(z)=\V_{\nu_1}(z)-z^2$ holds by the  proof of part \eqref{P:V-z^2}.
\end{proof}

\subsection{Proof of Theorem \ref{prop:Wojtek}}\label{ProofofT1.2}
The proof of Proposition \ref{P:Wojtek-1} uses $S$-transforms from \eqref{Poisson-S}.

\begin{lemma}\label{L:V2S}
Suppose that $\mu\ne \delta_0$ is a probability measure with compact support in $[0,\infty)$ and that $S_\mu(0)=1$.   Then
\begin{equation}
  \label{V2S}
 \V(z) =\frac{1+z}{S_\mu(z)}
\end{equation}
is in $\mathcal{V}_\infty$.
\end{lemma}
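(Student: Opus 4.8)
The plan is to invoke Proposition \ref{P:W-1}: to place $\V$ in $\mathcal{V}_\infty$ it suffices to exhibit a compactly supported probability measure $\omega$ with $\V(z)=\frac{1+z}{S_\mu(z)}=1+R_\omega(z)$. The correct candidate is essentially forced by the normalization. Since $\int x\,\mu(dx)=1/S_\mu(0)=1$ and $\mu$ is supported on $[0,\infty)$, the tilted measure $\omega(dx):=x\,\mu(dx)$ is a \emph{positive} compactly supported measure of total mass $1$, hence a compactly supported probability measure. First I would record the elementary identity $zM_\omega(z)=\int\frac{xz}{1-xz}\,\mu(dx)=M_\mu(z)-1=:\psi_\mu(z)$, which is exactly the free cumulant-generating input of the free Lévy--Khintchine form in \eqref{M2r}.

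Next I would convert the hypothesis on $S_\mu$ into a statement about $R_\omega$ by a single change of variables. Rewriting the second equation in \eqref{M2S} shows that $\chi_\mu(z):=\frac{z}{1+z}S_\mu(z)$ is the compositional inverse of $\psi_\mu$, i.e. $M_\mu(\chi_\mu(z))=1+z$ and $\psi_\mu(\chi_\mu(z))=z$ near $0$. Evaluating the defining relation \eqref{R2M} for $\omega$ at the point $zM_\omega(z)=\psi_\mu(z)$ gives $1+R_\omega(\psi_\mu(z))=M_\omega(z)=\psi_\mu(z)/z$. Substituting $u=\psi_\mu(z)$, so that $z=\chi_\mu(u)$, then yields $1+R_\omega(u)=u/\chi_\mu(u)=\frac{1+u}{S_\mu(u)}=\V(u)$, which is precisely the identity required by Proposition \ref{P:W-1}. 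Here $\psi_\mu'(0)=\int x\,\mu(dx)=1\neq 0$ guarantees that $\psi_\mu$ is analytic and locally invertible near $0$, and it gives $\V(0)=1$, so all the manipulations are justified on a neighborhood of the origin.

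I expect the only genuine obstacle to be conceptual rather than computational: recognizing that $\frac{1+z}{S_\mu(z)}=z/\chi_\mu(z)$ reinterprets the $S$-transform as the composition inverse of $\psi_\mu=M_\mu-1$, and that weighting $\mu$ by $x$ (legitimate because the support lies in $[0,\infty)$) produces exactly the probability measure whose $R$-transform realizes $\V-1$. The point where the hypotheses are actually consumed is the verification that $\omega=x\,\mu$ has total mass $1$, which is where $S_\mu(0)=1$ enters. If one prefers to avoid Proposition \ref{P:W-1}, an equivalent route is to construct $\nu$ directly from the free Lévy--Khintchine representation \eqref{M2r} with Lévy measure $\omega$, check that $r_\nu=\psi_\mu$, invert through \eqref{r-V} to obtain $\V_\nu(z)=z/\chi_\mu(z)=\frac{1+z}{S_\mu(z)}$, and conclude via Proposition \ref{P:W-3a} that $\nu$ is $\boxplus$-infinitely divisible; the argument above simply packages this existence step into Proposition \ref{P:W-1}.
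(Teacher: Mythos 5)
Your proposal is correct and follows essentially the same route as the paper: both take $\omega(dx)=x\,\mu(dx)$ (a probability measure precisely because $\int x\,\mu(dx)=1/S_\mu(0)=1$), both use the identity $zM_\omega(z)=M_\mu(z)-1$ together with \eqref{M2S}, and your invocation of Proposition \ref{P:W-1} is just a repackaging of the paper's step of defining $\nu$ via \eqref{M2r} and reading off $\V_\nu$ from \eqref{r-V}. Indeed, the ``alternative route'' you sketch in your last paragraph is verbatim the paper's proof.
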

\begin{proof}
Define
$\omega(dx)=x\mu(dx)$, and note that this is a probability measure since $\omega(\RR)=\int x \mu(dx)=1/S_\mu(0)=1$.  Let $\nu$ be the $\boxplus$-infinitely divisible probability measure  defined by \eqref{M2r}. Then
  $M_\mu(z)= 1+r_\nu(z)$, so \eqref{M2S} gives
   $$r_\nu\left(\frac{z}{1+z} S_\mu(z)\right)=z.$$ Recalling that composition inverse of  $z\mapsto r(z)$   is $z/\V_\nu(z)$, in a neighborhood of $z=0$ we
   get \eqref{V2S}.

\end{proof}

Lemma   \ref{L:V2S} yields a class of variance functions in $\mathcal{V}_\infty$ of the following form.

\begin{lemma}\label{L:V0}

Let $\beta_1,\dots,\beta_d>0$, $p_1,\dots,p_2>0$, with $\max\{p_j,1/\beta_j\}\ge1$. Then the function
\begin{equation}
  \label{V0}
   \mathbb{V}(z)=(1+z)\prod_{j=1}^d(1+\beta_j z)^{p_j}
\end{equation}
is in $\mathcal{V}_\infty$.
\end{lemma}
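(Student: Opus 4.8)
The plan is to realize the right-hand side of \eqref{V0} as $(1+z)/S_\mu(z)$ for a suitable probability measure $\mu$ and then invoke Lemma \ref{L:V2S}. Comparing with \eqref{V2S}, what I need is a measure $\mu$ with compact support in $[0,\infty)$, with $\mu\neq\delta_0$, and with $S_\mu(0)=1$, whose $S$-transform is
\[
S_\mu(z)=\prod_{j=1}^d\frac{1}{(1+\beta_j z)^{p_j}}.
\]
Then Lemma \ref{L:V2S} delivers $\mathbb{V}(z)=(1+z)/S_\mu(z)=(1+z)\prod_{j=1}^d(1+\beta_j z)^{p_j}\in\mathcal{V}_\infty$ at once.

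First I would construct the individual factors. For each $j$ the hypothesis $\max\{p_j,1/\beta_j\}\geq 1$ is precisely the existence condition recalled just before \eqref{Poisson-S}, so the measure $\mu_j:=\bigl(D_{1/\beta_j}(\pi_{1/\beta_j})\bigr)^{\boxtimes p_j}$ is a well-defined, compactly supported probability measure on $[0,\infty)$, and by \eqref{Poisson-S} its $S$-transform is $S_{\mu_j}(z)=1/(1+\beta_j z)^{p_j}$. In particular $S_{\mu_j}(0)=1$, so by the remark following \eqref{M2S} each $\mu_j$ has mean $\int x\,\mu_j(dx)=1$ and is therefore non-degenerate.

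Next I would assemble the product by free multiplicative convolution. Setting $\mu:=\mu_1\boxtimes\cdots\boxtimes\mu_d$, the multiplicativity of the $S$-transform gives $S_\mu(z)=\prod_{j=1}^d S_{\mu_j}(z)$, which is exactly the desired product. Evaluating at $z=0$ yields $S_\mu(0)=\prod_{j=1}^d 1=1$, whence $\mu$ has mean $1$ and in particular $\mu\neq\delta_0$. Since each $\mu_j$ is compactly supported in $[0,\infty)$, so is $\mu$, and thus all hypotheses of Lemma \ref{L:V2S} are verified; applying that lemma finishes the proof.

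The one point deserving care is that the iterated free multiplicative convolution $\mu_1\boxtimes\cdots\boxtimes\mu_d$ is well defined and remains compactly supported on the half-line. This is, however, a standard feature of $\boxtimes$ for compactly supported probability measures on $[0,\infty)$ that are distinct from $\delta_0$: unlike the convolution \emph{powers} $\mu^{\boxtimes p}$, which imposed the restriction $\max\{p_j,1/\beta_j\}\geq 1$ on each factor, the convolution of finitely many genuinely existing such measures is always defined, and the multi-fold case reduces to iterating the two-fold one. Hence no existence condition beyond the per-factor inequality $\max\{p_j,1/\beta_j\}\geq 1$ is needed once the $\mu_j$ themselves exist.
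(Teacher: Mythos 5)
Your proof is correct and takes essentially the same route as the paper's: the paper likewise builds each $\mu_j$ with $S_{\mu_j}(z)=(1+\beta_j z)^{-p_j}$ (citing the same existence condition behind \eqref{Poisson-S}), forms $\mu=\mu_1\boxtimes\cdots\boxtimes\mu_d$, and concludes via Lemma \ref{L:V2S}. Your version merely spells out the verification of the hypotheses of Lemma \ref{L:V2S} that the paper leaves implicit.
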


\begin{proof}
For $j\geq 1$, choose $\mu_j$ with $S_{\mu_j}(z)=(1+\beta_j z)^{-p_j}$, see \cite{arizmendi2016classical,HM-2011}.
Define  $\mu=\mu_1\boxtimes\mu_2\boxtimes\dots\boxtimes \mu_d$  so that
$$
S_\mu(z)=\prod_{j=1}^d (1+\beta_j z)^{-p_j}.
$$
Lemma \ref{L:V2S} ends the proof.
\end{proof}
We will deduce sufficiency in Theorem \ref{prop:Wojtek} from the following general result.

\begin{proposition} \label{P:Wojtek-1}
Assume that $d\in\NN$, $a,b\in\mathbb{R}$, $c>0$,
$b_1,\dots,b_d> 0$, $p_1,\dots,p_d> 0$ and that $\max\{p_j,c/b_j\}\ge1$
for $1\le j\le d$. Put
$$
\V(z)=a z+b z^2+(1+cz)\prod_{j=1}^d(1+b_jz)^{p_j}.
$$
If $b\geq -1$ then  $\V\in \mathcal{V}$. If $b\geq 0$ then  $\V\in \mathcal{V}_\infty$.
 \end{proposition}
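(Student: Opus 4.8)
The plan is to build $\V$ out of the variance functions supplied by Lemma~\ref{L:V0}, using the free-probabilistic description of $\mathcal{V}_\infty$ in Proposition~\ref{P:W-1} together with the operations of Theorem~\ref{Prop:operations}. Write $W(z):=(1+cz)\prod_{j=1}^d(1+b_jz)^{p_j}$, so that $\V(z)=az+bz^2+W(z)$. The strategy is: first place $W$ in $\mathcal{V}_\infty$; then add the linear term $az$ and the quadratic term $bz^2$, tracking whether the result lands in $\mathcal{V}_\infty$ (when $b\ge0$) or only in $\mathcal{V}$ (when $-1\le b<0$).

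First I would produce $W$. Applying Lemma~\ref{L:V0} with $\beta_j:=b_j/c>0$ gives $\V_0(z):=(1+z)\prod_{j=1}^d(1+(b_j/c)z)^{p_j}\in\mathcal{V}_\infty$, because its hypothesis $\max\{p_j,1/\beta_j\}=\max\{p_j,c/b_j\}\ge1$ is exactly the standing assumption of the proposition. Since $W(z)=\V_0(cz)$, it remains to see that rescaling the argument by $c$ preserves $\mathcal{V}_\infty$. Writing $\V_0=1+R_{\omega_0}$ via Proposition~\ref{P:W-1} and using $R_{D_c(\omega_0)}(z)=R_{\omega_0}(cz)$ from \eqref{dilationformulas}, I get $W(z)=\V_0(cz)=1+R_{\omega_0}(cz)=1+R_{D_c(\omega_0)}(z)$, and $D_c(\omega_0)$ is again a compactly supported probability measure, so $W\in\mathcal{V}_\infty$ by Proposition~\ref{P:W-1}. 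It is essential to lift to $\mathcal{V}_\infty$ before rescaling: arbitrary dilations of the argument are available there, whereas in $\mathcal{V}$ the contraction in Theorem~\ref{Prop:operations}\eqref{P-W-3} is only licensed for $c\ge1$.

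Next I would add the lower-order terms, again through $R$-transforms. When $b\ge0$, write $W=1+R_\omega$ and observe that $R_\omega(z)+az+bz^2=R_{\omega'}(z)$, where $\omega'$ is the free additive convolution of the $a$-translate of $\omega$ with the semicircle law of mean $0$ and variance $b$ (whose $R$-transform is $bz^2$ in the present convention, degenerating to $\delta_0$ when $b=0$): adding $az$ shifts only the first free cumulant, and adding $bz^2$ is $\boxplus$ with the semicircle. Both operations preserve compact support, so $\V=az+bz^2+W=1+R_{\omega'}\in\mathcal{V}_\infty$, which is the second assertion. (Alternatively, the $az$ term may be inserted by Theorem~\ref{Prop:operations}\eqref{P-W-2}.) For the first assertion, when $b\ge-1$ I would write $\V(z)=\big[az+(b+1)z^2+W(z)\big]-z^2$; the bracket lies in $\mathcal{V}_\infty$ by the case just treated (with $b+1\ge0$ in the role of $b$), and Theorem~\ref{Prop:operations}\eqref{P:V-z^2}, which carries $\mathcal{V}_\infty$ into $\mathcal{V}$ by subtracting $m^2$, then yields $\V\in\mathcal{V}$.

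The argument is essentially an assembly of earlier results, so the only genuinely delicate points are bookkeeping: verifying that the substitution $\beta_j=b_j/c$ converts the hypothesis $\max\{p_j,c/b_j\}\ge1$ exactly into that of Lemma~\ref{L:V0}, and identifying $az$ and $bz^2$ with a classical translation and with free convolution by a semicircle at the level of $R$-transforms. The one conceptual step worth stressing is the passage through $\mathcal{V}_\infty$ before scaling the argument by $c$, which is what frees $c$ from the restriction $c\ge1$; the split $b\ge0$ versus $b\ge-1$ then reflects precisely the $\mathcal{V}_\infty\to\mathcal{V}$ bijection $\V\mapsto\V-m^2$.
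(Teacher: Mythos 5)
Your proof is correct and follows essentially the same route as the paper's: obtain $W\in\mathcal{V}_\infty$ from Lemma~\ref{L:V0} with $\beta_j=b_j/c$, absorb $az+bz^2$ while staying in $\mathcal{V}_\infty$ when $b\ge0$, and pass to $\mathcal{V}$ via Theorem~\ref{Prop:operations}\eqref{P:V-z^2} when $-1\le b<0$. The only cosmetic difference is that you realize $az+bz^2$ directly as the $R$-transform of a translated semicircle, whereas the paper cites the known quadratic variance functions $1+az+bz^2\in\mathcal{V}_\infty$ and applies Theorem~\ref{Prop:operations}\eqref{P-W-4} --- the same computation in disguise.
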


In the present paper we are mainly interested in polynomial variance functions,
however here we would like to emphasize that the exponents
$p_j$ do not have to be integers; for example $(1+z)^{\sqrt{2}}$ or
$(1+z)^{3/2}(1+2z)^{3/2}$   are  variance functions in $\mathcal{V}_\infty$.

 \begin{proof}
Put $\V_1(z):=1+a z+b z^2$, $\V_2(z):=(1+cz)\prod_{j=1}^d(1+b_jz)^{p_j}$.
Then $\V_2\in\mathcal{V}_{\infty}$ in view of Lemma~\ref{L:V0} with $\beta_j=b_j/c$.
If $b\ge0$ then $\V_1\in\mathcal{V}_{\infty}$ (see \cite[Theorem 3.2]{Bryc-06-08} and the comments therein)
and consequently $\V(z)=\V_1(z)+\V_2(z)-1\in\mathcal{V}_{\infty}$
by Theorem~\ref{Prop:operations}\eqref{P-W-4},
which proves that $\V\in\mathcal{V}_\infty$ when $b\geq 0$. When $b\geq-1$,  we apply Theorem~\ref{Prop:operations}\eqref{P:V-z^2} to $ z^2+\V_1(z)+\V_2(z)-1\in \mathcal{V}_\infty$.
\end{proof}

We are now ready to prove  Theorem \ref{prop:Wojtek}.

\begin{proof}[Proof of Theorem \ref{prop:Wojtek}]
The case $c=0$ is well understood: $\V\in \mathcal{V}$ if and only if
$b+1\geq 0$ and $\V\in \mathcal{V}_\infty$ if and only if $b\geq 0$, see \cite{Bryc-06-08}.
In view of Lemma~\ref{P-W-0} we can assume that $c>0$.

Applying Proposition~\ref{P:Wojtek-1} with $d=1$, $b_1=c$ and $p_1=2$ we get that
$$az +bz^2+(1+cz)^3=1+(a+3c)z+(b+3 c^2)z^2+c^3 z^3$$
is in $\mathcal{V}$ for any $b\geq -1$, and in
 $\mathcal{V}_\infty$ for any $b\geq 0$, with any real
$a,c$.
Replacing
$a+3c, b+3c^2,c^3$ by $a, b , c$ respectively we get  the sufficient conditions for $\V\in \mathcal{V}$ and for $\V\in \mathcal{V}_\infty$ as stated (recall that $c>0$).

It remains to show that if  $b^3< 27 c^2$, then  $\V(z)=1+az+bz^2 +cz^3$ is not in $\mathcal{V}_\infty$.
  By Theorem \ref{Prop:operations}\eqref{P-W-2}, without loss of generality we may assume $a=0$.

We proceed by contradiction. Suppose $\V\in \mathcal{V}_\infty$. If  $b>0$ then by scaling  we'd get $1+z^2+c z^3\in \mathcal{V}_\infty$ for some (different) $c^2>1/27$.
By Proposition  \ref{P:W-1} this would mean that there exists a compactly supported probability measure $\omega$ with $r_\omega(z)=z+c z^2$,
contradicting \cite[Corollary 2.5]{chistyakov2011characterization} which  says this
 to be  possible if and only if  $c^2\leq 1/27$.

Suppose now that   $b\leq 0$. Then  by Theorem \ref{Prop:operations}\eqref{P-W-4} with $\V_2(z)=1 +|b| z^2$ we'd get  $1+c z^3 \in \mathcal{V}_\infty$.
Since  $c> 0$, we'd be able to rescale  and get, say,
$1+2z^3\in \mathcal{V}_\infty$.
Using Theorem \ref{Prop:operations}\eqref{P-W-4} again, we would get $1+z^2+2 z^3\in \mathcal{V}_\infty$, contradicting
\cite[Corollary 2.5]{chistyakov2011characterization} again.
(In fact, as explained in Remark \ref{R:sharp2} below, $1+2z^3$ is not in  $\mathcal{V}$.)
\end{proof}

\section{Variance functions and polynomials}\label{Sec:proofs}

In general, if $\V$ is (real) analytic at $0$ and $\V(0)\ne 0$, it is easy to see that expansion \eqref{f2P} holds, and its coefficients are polynomials $\{P_n(x)\}$ which
solve the recursion
\begin{equation}
  \label{P-rec}
  x P_n(x)= P_{n-1}(x)+\sum_{k=0}^n\frac{\V^{(k)}(0)}{k!} P_{n+1-k}(x),\; n\geq 0,
\end{equation}
with initial polynomials $P_{-1}(x)=0$ and $P_0(x)=1$.
 (In particular, polynomials $\{P_n\}$ are monic  when $\V(0)=1$.)
 To derive \eqref{P-rec},  multiply \eqref{DDf} by $m\V(m)$, expand $\V$ into the power series at $m=0$, expand $f(x,m)$ into power series (recall that $\V(0)\ne 0$),
  and compare the coefficients at the powers of $m$.

We therefore consider a slightly more general recursions than \eqref{CCRd}.
  Suppose that polynomials $\{P_n\}$ satisfy the recursion \begin{equation}
  \label{P-rec-a}
  x P_n(x)= P_{n-1}(x)+\sum_{k=0}^na_kP_{n+1-k}(x),\; n\geq 0
\end{equation}
with $a_0\ne 0$ and initial polynomials $P_{-1}(x)=0$ and $P_0(x)=1$.

 \begin{proposition}
  \label{Lem-intP} Suppose that there are $A, R>0$ such that $|a_k|\leq A R^k$ for all $k=0,1,\dots$.
  Define $\V(z)=\sum_{k=0}^\infty a_k z^k$ for $|z|<1/R$.
  \begin{enumerate}
\item  If polynomials $\{P_n\}$ satisfy  recursion \eqref{P-rec-a} then
\begin{equation}\label{referree-fix}
\sum_{n=0}^\infty P_n(x) z^n=\frac{\V(z)}{\V(z)+z(z-x)}
\end{equation}

  and the series converges uniformly over $x\in K$ for any compact set $K\subset\RR$. That is, there is  $r>0$ that does not depend on $x\in K$ such
  that the series   converges uniformly over $x\in K$ for all $|z|<r$.
  \item If polynomials $\{P_n\}$ satisfy  recursion \eqref{P-rec-a}  and there is a
  non-degenerate compactly supported centered
  probability
  measure $\nu$ such that $\int P_n(x)\nu(dx)=0$ for all
  $n\geq 1$, then $a_0=\V(0)>0$,
  and  $\V(\cdot)$ is the variance function of a \CSK family generated by $\nu$.

\item If $\V(\cdot)$ is a variance function of a \CSK family generated by  a non-degenerate centered compactly supported probability measure $\nu$ and $\{P_n\}$
are polynomials from \eqref{f2P}   then  $\int P_n(x)\nu(dx)=0$ for $n\geq 1$.

  \end{enumerate}
\end{proposition}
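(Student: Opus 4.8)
The plan is to handle the three parts in order, with (1) supplying the generating-function identity that both (2) and (3) run through; parts (1) and (3) are essentially computational, and the genuine content sits in (2).

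For part (1), I would first record the structural consequences of \eqref{P-rec-a}: since $a_0\ne0$, an easy induction shows $\deg P_n=n$ with leading coefficient $a_0^{-n}$. To obtain \eqref{referree-fix}, set $F(x,z)=\sum_{n\ge0}P_n(x)z^n$, multiply \eqref{P-rec-a} by $z^n$, and sum over $n\ge0$. Using $P_{-1}=0$, the left side is $xF$ and the first term on the right is $zF$; re-indexing the double sum $\sum_n\sum_{k=0}^n a_kP_{n+1-k}z^n$ and using $P_0=1$ turns it into $\tfrac1z(\V(z)F-\V(z))$. Collecting terms gives $F\cdot(\V(z)+z(z-x))=\V(z)$, i.e.\ \eqref{referree-fix}. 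The only analytic issue is convergence, and here I would prove by induction the bound $|P_n(x)|\le C\rho^n$ uniformly for $x$ in a fixed compact $K$: inserting $|a_k|\le AR^k$ into the recursion and summing the geometric tail $\sum_k(R/\rho)^k$, the inductive step closes as soon as $\rho$ is chosen large in terms of $\max_{x\in K}|x|$, $A$, $R$, $a_0$ (but not of $x$). This yields uniform convergence for $|z|<r:=1/\rho$ and legitimizes the termwise manipulations above.

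For part (2), the idea is to integrate \eqref{referree-fix} against $\nu$. Since the support of $\nu$ is compact, the uniform convergence from (1) permits termwise integration, and the hypotheses $\int P_0\,d\nu=1$, $\int P_n\,d\nu=0$ ($n\ge1$) collapse the right-hand side to the constant $1$:
\[
\int \frac{\V(z)}{\V(z)+z(z-x)}\,\nu(dx)=1, \qquad z \text{ near } 0 .
\]
Dividing by $\V(z)$ (legitimate since $\V(0)=a_0\ne0$) and writing $\V(z)+z(z-x)=z\big(\tfrac{\V(z)+z^2}{z}-x\big)$ turns the integral into a Cauchy transform, and the identity rearranges to
\[
F_\nu\!\left(z+\frac{\V(z)}{z}\right)=\frac{\V(z)}{z},
\]
which is precisely \eqref{F2V} but with the unknown $\V$ in place of the genuine variance function $\V_\nu$.

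The hard part will be deducing $\V=\V_\nu$ from the fact that both satisfy this $F_\nu$-relation. I would argue via local uniqueness of the relevant branch: the substitution $s=z+\V(z)/z$ recasts the relation as $z=s-F_\nu(s)$. As $\nu$ is centered, $F_\nu(s)=s-\sigma^2/s+O(1/s^2)$ with $\sigma^2=\int x^2\,\nu(dx)>0$, so $s\mapsto s-F_\nu(s)=\sigma^2/s+O(1/s^2)$ is invertible for large $s$; hence there is a \emph{unique} solution $s=s(z)$ with $s\to\infty$ as $z\to0$. Both $\V(z)/z$ and $\V_\nu(z)/z$ blow up at the origin (because $\V(0)=a_0\ne0$ and $\V_\nu(0)=\sigma^2\ne0$), so both select this same branch, forcing $\V=\V_\nu$; in particular $a_0=\V(0)=\V_\nu(0)=\sigma^2>0$. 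Finally, part (3) is the converse and follows immediately from (1): when $\V$ is the variance function of the \CSK family generated by $\nu$, the coefficients $P_n$ in \eqref{f2P} satisfy \eqref{P-rec-a} with $a_k=\V^{(k)}(0)/k!$, so \eqref{referree-fix} holds and coincides with $f(x,m)$ of \eqref{f(x,m)}; integrating the identity $\int f(x,m)\,\nu(dx)=1$ (the total mass of $Q_m$) termwise and matching powers of $m$ gives $\int P_n\,d\nu=0$ for every $n\ge1$.
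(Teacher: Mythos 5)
Your proposal is correct, and parts (1) and (3) follow essentially the same path as the paper: the same inductive geometric bound $\sup_{x\in K}|P_n(x)|\le C^n$ to get uniform convergence, the same resummation of \eqref{P-rec-a} to obtain the functional equation for the generating function, and the same termwise integration of \eqref{f2P} for the converse direction. Part (2), however, is where you genuinely diverge. The paper argues directly at the level of densities: it uses $\int P_2\,d\nu=0$ together with $\int x\,d\nu=0$ to read off $a_0=\int x^2\,d\nu>0$ at the outset, notes that this makes $f(x,m)\ge0$ on the support of $\nu$ for small $m$, and then the termwise integration $\int f(x,m)\,\nu(dx)=1$ exhibits $\{f(x,m)\nu(dx)\}$ as a family of probability measures, i.e.\ as the \CSK family generated by $\nu$ with (pseudo-)variance function $\V$. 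You instead push the identity $\int f(x,z)\,\nu(dx)=1$ into the form $F_\nu\bigl(z+\V(z)/z\bigr)=\V(z)/z$ and compare with \eqref{F2V} via uniqueness of the solution $s$ of $z=s-F_\nu(s)$ near $s=\infty$ (legitimate, since $s-F_\nu(s)=\sigma^2/s+O(1/s^2)$ with $\sigma^2>0$ is injective there and both $z+\V(z)/z$ and $z+\V_\nu(z)/z$ blow up as $z\to0$), obtaining $\V=\V_\nu$ and only then $a_0=\sigma^2>0$. Your route buys a clean identification with the already-known object $\V_\nu$ without having to discuss positivity of the density, at the cost of invoking the asymptotics of $F_\nu$ at infinity and the prior existence of $\V_\nu$; the paper's route is more elementary and self-contained, verifying the defining properties of a \CSK family directly. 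Both are complete proofs.
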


\begin{proof} (i) Since $P_0(x)=1$, without loss of generality, we may assume that $A=1$.
Let $M=\sup_{x\in K}|x|$.  Choose $C>R$ such that
\begin{equation}
  \label{C(M,R)}
  \frac{M}{C}+\frac{1}{C^2}+\frac{R}{C-R}\leq |a_0|.
\end{equation}
We now check by induction that with this choice of $C$ we have
\begin{equation}
  \label{supP}
  \sup_{x\in K}|P_n(x)|\leq C^n \mbox{ for all $n\geq 0$}.
\end{equation}
Clearly, $|P_0(x)|\leq 1\leq C^0$ and $\sup_{x\in K} |P_1(x)|= \sup_{x\in K} |x/a_0|\leq M/|a_0|\leq C$. Suppose that $N\geq 1$ is such that \eqref{supP} holds for
all $P_n$ with $n\leq N$. From \eqref{P-rec-a} we see that
\begin{multline*}
|a_0|\sup_{x\in K} |P_{N+1}(x)|\leq \sup_{x\in K}|x P_N(x)|+\sup_{x\in K}| P_{N-1}(x)|+\sum_{k=1}^N R^k \sup_{x\in K}|P_{N+1-k}(x)|
\\
\leq M C^N+C^{N-1}+C^{N+1}\sum_{k=1}^N(\tfrac{R}{C})^k\leq C^{N+1} \left(\tfrac{M}{C}+\tfrac{1}{C^2}+\tfrac{R}{C-R}\right)\leq |a_0|
C^{N+1}
\end{multline*}
by  \eqref{C(M,R)}. This proves \eqref{supP} by induction.

From \eqref{supP} it is clear that with $r=1/C$   the series  \eqref{f2P} converges uniformly over $x\in K$ for all (complex) $|m|<r$.

To identify the limit, denote the sum of the series by $\varphi(x,z)$. Multiplying \eqref{P-rec-a} by $z^n\ne 0$ and summing over $n$, we get
  \begin{equation}\label{phi-eq}
    x\varphi(x,z)=z \varphi(x,z)+\frac{1}{z}\sum_{n=0}^\infty  \sum_{k=0}^nz^ka_k z^{n+1-k} P_{n+1-k}(x).
  \end{equation}
  Changing the order of summation,
 \begin{multline*}\sum_{n=0}^\infty \sum_{k=0}^nz^ka_k z^{n+1-k} P_{n+1-k}(x) =\sum_{k=0}^\infty z^ka_k \sum_{n=k}^\infty  z^{n+1-k} P_{n+1-k}(x)\\=
 \sum_{k=0}^\infty z^ka_k  \left(\varphi(x,z)-1\right) = \V(z) \left(\varphi(x,z)-1\right).
 \end{multline*}
Inserting this into \eqref{phi-eq} we see that
$$
x\varphi(x,z)=z \varphi(x,z)+\frac{\V(z)}{z}\left(\varphi(x,z)-1\right).
$$
The solution of this equation is $\varphi(x,z)=\frac{\V(z)}{\V(z)+z(z-x)}$ as claimed.

(ii)
Since polynomial  $P_2(x)=x^2/a_0^2-a_1 x/a_0^2-1/a_0$ integrates to $0$,  and $\int x\nu(dx)=0$ by assumption, we see that $a_0>0$. So
$\V(m)\geq 0$ in some neighborhood of zero and on the support of $\nu$ the generating function $f(x,m)\geq 0$ for $m$ small enough.

 Since  $\int P_n(x)\nu(dx)=0$ for $n\geq 1$,  and by part (i)   series \eqref{referree-fix} converges uniformly on the support of $\nu$,
  integrating   term-by-term we get $\int f(x,m)\nu(dx)=1$, i.e.,
$\V$ is  the variance function of the CSK family generated by $\nu$.

(iii)  Suppose  that $\{P_n\}$ are polynomials from \eqref{f2P} and $\nu(dx)$ has compact support. Then \eqref{P-rec} implies \eqref{P-rec-a} with $a_k=\V^{(k)}(0)/k!$.
Since   $\V(0)\ne 0$ and $\V$ is real analytic, one can find $R>1$  such that $|\V^{(k)}(0)|\leq k! \V(0) R^k$, so the assumption  on the growth of $|a_k|$ is satisfied.
By uniform convergence
for all small enough $m$ we can integrate series \eqref{f2P} term-by-term. We get
$$
1=\int f(x;m)\nu(dx)=1+\sum_{n=1}^\infty m^n \int P_n(x)\nu(dx).
$$
Thus $\int P_n(x)\nu(dx)=0$ for all $n\geq 1$.
\end{proof}

Next, we relate polynomial variance functions to $(\nu;d)$-orthogonality.
\begin{proposition}\label{T:V2d}
Suppose that $\V$ is a variance function of a \CSK family generated by
 a non-degenerate compactly supported probability measure $\nu$  with mean $0$ and variance $1$.
Consider the family of polynomials $\{P_n(x)\}$ with generating function \eqref{f2P}, where $f(x,m)$ is given by \eqref{f(x,m)}. Then
\begin{enumerate}
  \item $\int P_n(x)\nu(dx)=0$ for $n\geq 1$.
  \item If  polynomial $P_2(x)$ is orthogonal in $L_2(\nu)$ to all polynomials $\{P_n(x): n\geq 2+d\}$, then  the family $\{P_n(x)\}$
   is $(\nu;d)$-orthogonal, satisfies recursion \eqref{CCRd}
       and $\V$ is a polynomial of degree at most $d+1$.
\item Conversely, if the variance function $\V$ of a \CSK family generated by measure
$\nu$  is a polynomial of degree at most $d+1$ then the polynomials from expansion \eqref{f2P} are $(\nu;d)$-orthogonal.
\end{enumerate}

\end{proposition}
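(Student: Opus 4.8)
The plan is to prove the three parts in the order (1), (2), (3), obtaining the orthogonality claim inside (2) as a consequence of (3). Throughout I write $a_k:=\V^{(k)}(0)/k!$, so that $a_0=\V(0)=1$, and I use that the $\{P_n\}$ are the monic polynomials solving \eqref{P-rec}; solving the first two instances of that recursion gives $P_0=1$, $P_1=x$, and $P_2=x^2-a_1x-1$. Part (1) needs no new argument: it is exactly Proposition~\ref{Lem-intP}(iii) applied to the present measure $\nu$.

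The core of part (2) is to convert the orthogonality hypothesis on $P_2$ into polynomiality of $\V$. Writing $Q_m(dx)=f(x,m)\nu(dx)$, I would first record that $\int x\,Q_m(dx)=m$ by \eqref{mQ} and hence, using the variance identity \eqref{QVar}, that $\int x^2\,Q_m(dx)=\V(m)+m^2$. Consequently, for small $m$,
\[
\int P_2(x)\,f(x,m)\,\nu(dx)=\int(x^2-a_1x-1)\,Q_m(dx)=\V(m)+m^2-a_1m-1 .
\]
On the other hand, integrating the expansion \eqref{f2P} term by term—justified by the uniform convergence in Proposition~\ref{Lem-intP}(i), since $P_2$ is bounded on the compact support of $\nu$—gives $\int P_2 f\,\nu(dx)=\sum_{n\ge0}\big(\int P_2P_n\,\nu(dx)\big)m^n$. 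The hypothesis that $\int P_2P_n\,\nu(dx)=0$ for all $n\ge d+2$ says exactly that this series is a polynomial in $m$ of degree at most $d+1$; comparing with the previous display, $\V(m)$ is a polynomial of degree at most $d+1$. This is the last assertion of (2). The recursion \eqref{CCRd} is then the truncation of \eqref{P-rec}: because $a_k=0$ for $k\ge d+2$, the sum in \eqref{P-rec} becomes finite and collapses to a $(d+2)$-term recursion, with $b_1=a_1$, $b_2=1+a_2$, and $b_k=a_k$ for $3\le k\le d+1$. The remaining assertion of (2), namely $\nu$-$d$-orthogonality, now follows from part (3).

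For part (3) I assume $\V$ has degree at most $d+1$ and set $I_{n,k}:=\int P_nP_k\,\nu(dx)$, which is symmetric in $n,k$. Two base identities hold: $I_{n,0}=\int P_n\,\nu(dx)=\delta_{n,0}$ by part (1), and $I_{n,1}=\int xP_n\,\nu(dx)=\delta_{n,1}$, read off coefficientwise from $\int xf(x,m)\nu(dx)=m$. I would then prove $I_{n,k}=0$ for $n\ge 2+(k-1)d$ by induction on $k\ge1$, the case $k=1$ being the second base identity. In the inductive step I use the finite recursion \eqref{P-rec} twice: once to write $P_k=xP_{k-1}-\sum_{j=1}^{(d+1)\wedge(k-1)}a_jP_{k-j}-P_{k-2}$, and once to move $x$ across the bilinear form via $\int P_n\,(xP_{k-1})\,\nu(dx)=\int(xP_n)\,P_{k-1}\,\nu(dx)$ and then expand $xP_n$. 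This expresses $I_{n,k}$ as a finite linear combination of terms $I_{n',k-1}$ with $n-d\le n'\le n+1$ and of terms $I_{n,k-j}$ with $1\le j\le d+1$ (together with $I_{n,k-2}$), all having second index strictly below $k$. By the inductive hypothesis each term vanishes once $n\ge 2+(k-1)d$; the binding constraint is the lowest term $I_{n-d,\,k-1}$, which vanishes precisely when $n-d\ge 2+(k-2)d$, i.e.\ exactly when $n\ge 2+(k-1)d$, while every other index is at least this large.

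I expect the decisive step to be the identity in part (2): recognizing that pairing the generating function $f(x,m)$ with the single quadratic $P_2$ yields $\V(m)+m^2-a_1m-1$, so that a purely linear orthogonality hypothesis on $P_2$ translates, coefficient by coefficient, into $\V$ being a polynomial of degree at most $d+1$; the essential input here is the variance-function identity \eqref{QVar}. The rest is bookkeeping: one must check that the two applications of the recursion in part (3) keep every resulting index inside the vanishing range guaranteed by the inductive hypothesis, which, as noted above, reduces to the single inequality $n-d\ge 2+(k-2)d$.
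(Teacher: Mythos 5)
Your proposal is correct, and for parts (1) and (2) it coincides with the paper's argument: part (1) is cited from Proposition~\ref{Lem-intP}(iii), and the decisive step of part (2) --- pairing the generating function with $P_2(x)=x^2-\V'(0)x-1$ and using \eqref{mQ} and \eqref{QVar} to get $\int P_2(x)f(x,m)\nu(dx)=\V(m)+m^2-\V'(0)m-1$, then comparing with the term-by-term integration of \eqref{f2P} --- is exactly what the paper does; the paper likewise deduces the recursion by truncating \eqref{P-rec} and obtains the $\nu$-$d$-orthogonality in (2) from part (3). Where you genuinely diverge is part (3). The paper first notes that $\{P_j: j\le k\}$ spans the same space as $\{1,x,\dots,x^k\}$, which reduces $\nu$-$d$-orthogonality to showing $\int x^kP_n(x)\nu(dx)=0$ for $n\ge 2+(k-1)d$; a single application of \eqref{CCRd} per step then drives an ordinary induction on $k$, with only the lower bound $n-d\ge 2+(k-2)d$ to check. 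Your version inducts directly on $I_{n,k}=\int P_nP_k\,d\nu$, applying the recursion twice (once to expand $P_k$ via $xP_{k-1}$, once to move $x$ across the bilinear form). This is valid, but it costs you more bookkeeping: the terms $I_{n,k-j}$ with $1\le j\le d+1$ force a \emph{strong} induction on the second index rather than the one-step induction you announce, and you should also dispose of the degenerate indices $k-j\le 0$ (via $P_{k-j}=0$ for $k-j<0$ and $I_{n,0}=0$ for $n\ge1$) and note that the comparison $2+(k-1)d\ge 2+(k-j-1)d$ uses $d\ge1$. With those small points tightened, your induction closes; the paper's reduction to monomials simply buys a lighter inductive hypothesis.
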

\begin{proof}
(i) This is included in Proposition \ref{Lem-intP}(iii).

 (ii)  Since $P_2(x)=x^2-\V'(0)x-1$ and $\V(m)$ is given by  \eqref{QVar}, we see that
 \begin{equation}
   \label{P2f1}
   \int P_2(x)f(x,m)\nu(dx)=\V(m)+m^2-\V'(0)m-1.
 \end{equation}
On the other hand due to uniform convergence (Lemma \ref{Lem-intP}), for  all small enough $m$ we can
  integrate  series \eqref{f2P} term by term.  Since  by assumption $\int P_2(x)P_k(x)\nu(dx)=0$ for $k\geq 2+d$, we get
 \begin{multline}
   \label{P2f2}\int P_2(x)f(x,m)\nu(dx)=\int P_2(x)\sum_{k=0}^\infty P_k(x)m^k\nu(dx)\\ =\sum_{k=0}^\infty m^k\int P_2(x) P_k(x)\nu(dx)
   =\sum_{k=0}^{d+1}m^k \int P_2(x)P_k(x)\nu(dx).
    \end{multline}
  Thus,  comparing the right hand sides of \eqref{P2f1} and \eqref{P2f2}  we see
  that
$$
\V(m)= 1-m^2+\V'(0)m+\sum_{k=0}^{d+1}c_km^k
$$
 is a polynomial of degree at most $d+1$, where  $c_k=\int P_2(x)P_k(x)\nu(dx)$.

(iii) We now  prove the converse claim. If $\V$ is a polynomial of degree $d+1$, then recursion \eqref{P-rec} becomes  \eqref{CCRd}.
Proposition \ref{Lem-intP}(iii)  gives $\int P_n(x)\nu(dx)=0$ for $n\geq1$. Noting that
$\{P_j(x):j\leq k\}$ span the same subspace as monomials, to prove $(\nu;d)$-orthogonality it remains to verify that
\begin{equation}
  \label{d-orthogonality-ind}
\int x^k P_n(x)\nu(dx)=0 \mbox{ for all $n\geq 2+(k-1)d$}
\end{equation}
for all $k\in\NN$.

 The proof proceeds by induction on $k$. Consider first the case $k=1$. From \eqref{CCRd} we see that $xP_n$
is a linear combination of $ P_{n+1},P_n,\dots,P_{n-d}$. Thus,   $\int x
P_n(x)\nu(dx)=0$ if $n\geq d+1$. If $n=2,\dots d$, then \eqref{CCRd} shows that $xP_n$ is a linear combination of
$P_{n+1},\dots,P_1$, thus $\int x P_n(x)\nu(dx)=0$, too.

Suppose now that \eqref{d-orthogonality-ind} holds for some $k\geq 1$. Take $n\geq 2+k d$. Then $n>d+1$ so from \eqref{CCRd} we
see that polynomial $x^{k+1}P_n(x)$ is a linear combination of polynomials $\{x^kP_{j}(x): j=n-d,n-d+1,\dots,n+1\}$. Since
$j\geq n-d\geq 2+kd-d=2+(k-1)d$, each of the polynomials $x^kP_{j}(x)$  in the linear combination satisfies the inductive
assumption, $\int x^kP_{j}(x)\nu(dx)=0$. Thus $\int x^{k+1}P_n(d)\nu(dx)=0$, proving that \eqref{d-orthogonality-ind}
holds for all $k\in\NN$.

\end{proof}
Combining the above results with Theorem \ref{prop:Wojtek} we have the following, compare \cite[Th\'{e}or\`eme 2.1]{maroni1989orthogonalite} and  \cite[Theorem 3.1]{Iseghem1987approx},
who study polynomials given by finite recursions under regularity conditions which fail in the case we are interested in.
(Ref.
\cite{da1999shohat} gives a nice introduction to their theory.)
\begin{corollary}\label{Cor-favard3}
Consider polynomials $\{P_n(x)\}$ given by the 4-step recursion:
\begin{eqnarray*}
x P_1(x)&=&P_2(x)+aP_1(x)+P_0(x),\\
x P_2(x)&=&P_3(x)+a P_2(x)+bP_1(x),\\
  x P_n(x)&=& P_{n+1}(x)+a P_{n}(x)+bP_{n-1}(x)+c P_{n-2}(x),\; n\geq 3,
\end{eqnarray*}
with $P_0(x)=1, P_1(x)=x$. Then the following conditions are equivalent.
\begin{enumerate}
  \item $b^3\geq 27 c^2$.
  \item Polynomial $\{P_n\}$ are $(\nu;2)$-orthogonal for some probability measure $\nu$ (which then necessarily has mean 0, variance 1, and compact support).
\end{enumerate}
\end{corollary}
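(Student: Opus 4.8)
The plan is to read the four-step recursion as the polynomial recursion \eqref{P-rec} attached to a \emph{cubic} candidate variance function, and then to push the two implications through Theorem~\ref{prop:Wojtek} together with Propositions~\ref{Lem-intP} and~\ref{T:V2d}. First I would match coefficients: comparing the displayed recursion with \eqref{P-rec} (equivalently \eqref{P-rec-a} with $a_0=1$, $a_1=a$, $a_2=b-1$, $a_3=c$ and $a_k=0$ for $k\ge 4$) identifies the associated function as
\[
\V(z)=1+az+(b-1)z^2+cz^3 .
\]
Indeed, \eqref{P-rec} at $n=0,1,2$ and $n\ge3$ reproduces $P_1=x$, then $xP_1=P_2+aP_1+P_0$, $xP_2=P_3+aP_2+bP_1$, and $xP_n=P_{n+1}+aP_n+bP_{n-1}+cP_{n-2}$, exactly the corollary's lines. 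Since the coefficient of $m^2$ is $b-1$ and of $m^3$ is $c$, Theorem~\ref{prop:Wojtek} says precisely that $\V\in\mathcal V$ iff $((b-1)+1)^3\ge 27c^2$, i.e. iff condition~(1) holds. This reformulation $\text{(1)}\Leftrightarrow \V\in\mathcal V$ is the backbone of both directions; note $d=2$ and $\deg\V\le d+1=3$.

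For $\text{(1)}\Rightarrow\text{(2)}$ I would start from $b^3\ge 27c^2$. By the above $\V\in\mathcal V$, so Theorem~\ref{prop:Wojtek} produces a compactly supported, centered, variance-$1$ measure $\nu$ generating a \CSK family with variance function $\V$. The polynomials defined by \eqref{f2P} for this $\nu$ satisfy \eqref{P-rec}, which is the displayed recursion with these $a,b,c$ and the same initial data $P_0=1$, $P_1=x$; by uniqueness of the recursion's solution they coincide with the given $\{P_n\}$. Proposition~\ref{T:V2d}(iii) with $d=2$ (equivalently Theorem~\ref{T:poly}) then yields $\nu$-$2$-orthogonality, and this $\nu$ has mean $0$, variance $1$ and compact support, as claimed.

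For $\text{(2)}\Rightarrow\text{(1)}$, suppose $\{P_n\}$ are $\nu$-$2$-orthogonal for some probability measure $\nu$ (necessarily with all moments, by Definition~\ref{Def-d-orth}). The low-order conditions are immediate: $\int P_1\,d\nu=\int x\,d\nu=0$ gives mean $0$, and $P_2(x)=x^2-ax-1$ with $\int P_2\,d\nu=0$ gives $\int x^2\,d\nu=1$. The crux, and the step I expect to be the main obstacle, is upgrading $\nu$ to a \emph{compactly supported} measure, since Proposition~\ref{Lem-intP}(ii) takes compact support as a hypothesis whereas the corollary asserts it as a conclusion; this is exactly the Favard-type content that Theorem~\ref{T:poly} does not cover. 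I would obtain it from moment growth: writing $x^n=\sum_j c^{(n)}_j P_j$ and using $\int P_j\,d\nu=0$ for $j\ge1$ gives $\int x^n\,d\nu=c^{(n)}_0$, while the recursion (whose coefficients are the fixed numbers $1,a,b,c$) yields $\sum_j|c^{(n+1)}_j|\le B\sum_j|c^{(n)}_j|$ with $B:=2+|a|+|b|+|c|$, hence $|\int x^n\,d\nu|\le B^n$. Geometric moment growth forces $\operatorname{supp}\nu\subseteq[-B,B]$, so $\nu$ is compactly supported and non-degenerate (its variance is $1$). Now Proposition~\ref{Lem-intP}(ii) identifies $\V(z)=1+az+(b-1)z^2+cz^3$ as the variance function of the \CSK family generated by $\nu$, so $\V\in\mathcal V$, and Theorem~\ref{prop:Wojtek} returns $b^3\ge27c^2$, completing the equivalence. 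The only place demanding care beyond bookkeeping is this passage from positivity of $\nu$ to compact support; everything else is a direct application of the quoted results.
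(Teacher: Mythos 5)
Your proof is correct and follows the same route as the paper: read the recursion as \eqref{P-rec-a} with $a_0=1$, $a_1=a$, $a_2=b-1$, $a_3=c$, identify $\V(m)=1+am+(b-1)m^2+cm^3$, and then run Theorem \ref{prop:Wojtek} together with Proposition \ref{T:V2d}(iii) for $(1)\Rightarrow(2)$ and Proposition \ref{Lem-intP}(ii) for $(2)\Rightarrow(1)$. The one place where you do more than the paper is exactly the point you flagged: the paper's proof invokes Proposition \ref{Lem-intP}(ii) directly, even though that proposition takes compact support of $\nu$ as a hypothesis while the corollary asserts it as a conclusion; your moment-growth argument (the bound $\int x^{2n}\,\nu(dx)\le B^{2n}$ with $B=2+|a|+|b|+|c|$, forcing $\operatorname{supp}\nu\subseteq[-B,B]$) correctly supplies this step, which the paper leaves implicit.
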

\begin{proof}
  If $b^3\geq 27 c^2$, then by Theorem \ref{prop:Wojtek}, $\V(m)=1+am+(b-1)m^2+cm^3$ is a variance function, and \eqref{f2P} holds. So
  Proposition \ref{T:V2d}(iii) implies (ii).
  Conversely, if (ii) holds, then by Proposition \ref{Lem-intP}(ii),  $\V(m)=1+am+(b-1)m^2+cm^3\in\mathcal{V}$, so Theorem \ref{prop:Wojtek} implies (i).
\end{proof}
\subsection{Proof of  Theorem \ref{T:poly}}\label{ProofofT1.5}
By Proposition \ref{Lem-intP}, for a family of monic polynomials $\{P_n(x)\}$, recursion \eqref{CCRd} holds if and only if
  the generating function \eqref{f2P}
is given by \eqref{f(x,m)} with
\begin{equation}
  \label{rec2V}
  \V(m)=1+b_1 m + (b_2-1)m^2+\sum_{k=3}^{d+1}b_km^k.
\end{equation}
Thus statements (i) and (ii) are equivalent.

Proposition  \ref{T:V2d}(ii) gives  implication (iii)$\Rightarrow$(i), as it says that already  a special case of $(\nu;d)$-orthogonality implies (i);
 the implication (i)$\Rightarrow$(iii) is  %
 Proposition  \ref{T:V2d}(iii).

\section{Additional results and comments}\label{Sec:conclusions}

\subsection{A combinatorial example}\label{Sect:Combinatorics}
Consider the probability distribution on $[0,\infty)$, which in \cite{mlotkowski2010fuss} was denoted $\mu(3,1)$.
Its moments are $\frac{1}{3n+1}\binom{3n+1}{n}$ (Fuss numbers of order 3, A001764 in OEIS)
and the moment generating function, denoted $\mathcal{B}_3(z)$, is
\[
\mathcal{B}_3(z)=\frac{3}{3-4\sin^2\alpha}=\frac{2\sin\alpha}{\sqrt{3z}},
\]
where $\alpha=\frac{1}{3}\arcsin\sqrt{27z/4}$.
The first expression was obtained in \cite{mlotkowskipenson2014},
the second can be obtained by elementary manipulations.
The density function was described in \cite{pensonsolomon,mlotkowskipensonzyczkowski2013}.
We are going to study a probability distribution which is a transformation of $\mu(3,1)$.

\begin{proposition}
If $\mu$ is a probability measure on $[0,\infty)$,
with the moment generating function $M_{\mu}(z)$,
then there exists a probability measure $\mu_1$ on $[0,\infty)$
such that $M_{\mu_1}(z)=\frac{1}{1-z M_{\mu}(z)}$.
\end{proposition}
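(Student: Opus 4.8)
The plan is to translate the identity $M_{\mu_1}(z)=\frac{1}{1-zM_\mu(z)}$ into a statement about reciprocal Cauchy transforms and then invoke the same analytic machinery used in Lemma~\ref{L:inverse}. Writing $M_\mu(z)=1/(zF_\mu(1/z))$ and $G_\mu=1/F_\mu$ as in the proof of Lemma~\ref{L:inverse}, a direct computation shows that the desired relation is equivalent, after the substitution $w=1/z$, to
\[
F_{\mu_1}(w)=w\bigl(1-G_\mu(w)\bigr)=w-wG_\mu(w).
\]
So I would \emph{define} $F(w):=w(1-G_\mu(w))$ and try to prove that $F$ is the reciprocal Cauchy transform of a probability measure $\mu_1$ whose support lies in $[0,\infty)$; the measure it produces then automatically satisfies $M_{\mu_1}(z)=1/(1-zM_\mu(z))$.

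The key computational step is the elementary identity
\[
wG_\mu(w)=\int_0^\infty\frac{w}{w-x}\,\mu(dx)=1+\int_0^\infty\frac{x}{w-x}\,\mu(dx),
\]
valid for $w\in\CC\setminus[0,\infty)$ and requiring no moment assumption on $\mu$. It yields the Nevanlinna-type representation
\[
F(w)=w-1+\int_0^\infty\frac{x}{x-w}\,\mu(dx).
\]
From this I would read off the two properties needed to apply Proposition~5.2 of \cite{bercovici1993free} (the tool already used in Lemma~\ref{L:inverse}): first, for $w\in\CC_+$ one has $\Im\int_0^\infty\frac{x}{x-w}\mu(dx)=\Im w\int_0^\infty\frac{x}{|x-w|^2}\mu(dx)\ge 0$, so $\Im F(w)\ge\Im w>0$ and $F$ maps $\CC_+$ into itself; second, taking $w=iy$ and using $|x/(x-iy)|\le 1$ with dominated convergence gives $\int_0^\infty\frac{x}{x-iy}\mu(dx)\to 0$, hence $F(iy)=iy-1+o(1)$, which is the required behaviour at infinity. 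Thus $F=F_{\mu_1}$ for a unique probability measure $\mu_1$ on $\RR$ (of mean $1$).

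It remains to locate the support in $[0,\infty)$, which I expect to be the only genuinely substantive point. The representation above shows that $F$ continues analytically to all of $\CC\setminus[0,\infty)$, and for real $u<0$ we have $x-u>0$ and $0\le\frac{x}{x-u}<1$, so
\[
F(u)=u-1+\int_0^\infty\frac{x}{x-u}\,\mu(dx)<u<0.
\]
In particular $F$ is real and nonvanishing on $(-\infty,0)$, so $G_{\mu_1}=1/F$ is analytic and real-valued across $(-\infty,0)$. By Stieltjes inversion this forces $\mu_1\bigl((-\infty,0)\bigr)=0$, i.e.\ $\mathrm{supp}\,\mu_1\subseteq[0,\infty)$, completing the proof.

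Finally, I would remark that when $0<\int x\,\mu(dx)<\infty$ one can instead obtain the existence of $\mu_1$ on $\RR$ directly from the forward part of Lemma~\ref{L:inverse}, applied with $\beta=1$, $\alpha=\int x\,\mu(dx)$ and $\omega$ the size-biased measure $\omega(dx)=x\,\mu(dx)/\int x\,\mu(dx)$; the degenerate case $\mu=\delta_0$ gives $\mu_1=\delta_1$ outright. The analytic route above is preferable because it treats all $\mu$ on $[0,\infty)$ at once and, crucially, simultaneously delivers the support statement, which Lemma~\ref{L:inverse} by itself does not.
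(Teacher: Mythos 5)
Your argument is correct, but it follows a genuinely different route from the paper's. The paper settles the claim in one line by invoking Proposition~6.1 of \cite{bercovici1993free}, the characterization of $\psi$-functions of probability measures on $[0,\infty)$: writing $\psi(z)=zM_{\mu}(z)/(1-zM_{\mu}(z))=M_{\mu_1}(z)-1$, it only needs to observe that $z/(1-zM_{\mu}(z))$ maps $\CC^{+}$ into itself, which follows from $M_{\mu}(\CC^{+})\subseteq\overline{\CC^{+}}$ after rationalizing the denominator; positivity of the support comes for free from that half-line characterization. You instead reduce to the $F$-transform characterization for measures on all of $\RR$ (Proposition~5.2 of the same reference, the tool already used in Lemma~\ref{L:inverse}) via the correct translation $F_{\mu_1}(w)=w\bigl(1-G_{\mu}(w)\bigr)$ and the representation $F(w)=w-1+\int_0^\infty x(x-w)^{-1}\,\mu(dx)$, and you must then supply a separate argument locating the support, which you do correctly by showing $F(u)<u<0$ for $u<0$ and appealing to Stieltjes inversion. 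All the estimates check out without any moment hypothesis on $\mu$ (the bound $\lvert x/(x-iy)\rvert\le 1$ with dominated convergence for the behaviour at infinity, and $0\le x/(x-u)<1$ for $u<0$), so the proof is complete. Your route is longer but more self-contained, reuses only machinery already present in the paper, and yields the explicit Nevanlinna form of $F_{\mu_1}$ and the value $\int x\,\mu_1(dx)=1$ as by-products; the paper's route is shorter at the cost of citing a second, half-line-specific characterization. Your closing remark---that Lemma~\ref{L:inverse} with $\beta=1$, $\alpha=\int x\,\mu(dx)$ and the size-biased $\omega$ gives existence on $\RR$ but not the support statement, and only under a finite first moment---is also accurate.
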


\begin{proof}
This is a consequence of Proposition~6.1 in %
\cite{bercovici1993free}   with $\psi(z)=\frac{z M_{\mu}(z)}{1-z M_{\mu}(z)}$.
Namely, since $M_{\mu}(z)$ %
is $\mathbb{C}^{+}\to\mathbb{C}^{+}$,
the function
\[
\frac{z}{1-z M_{\mu}(z)}=\frac{z-|z|^2 \overline{M_{\mu}(z)}}{\left|1-z M_{\mu}(z)\right|^2}
\]
is also $\mathbb{C}^{+}\to\mathbb{C}^{+}$.
\end{proof}

Let $\mu$ denote the probability measure which satisfies
\[
M_{\mu}(z)=\frac{1}{1-z\mathcal{B}_3(z)}=\frac{3}{3-2\sqrt{3z}\sin\alpha},
\]
$\alpha=\frac{1}{3}\arcsin\sqrt{27z/4}$.
This identity implies that moments $s(n)$ of $\mu$ satisfy the following recurrence
relation: $s(0)=1$ and for $n\ge1$
\[
s(n)=\sum_{i=0}^{n-1}\frac{1}{3i+1}\binom{3i+1}{i}s(n-1-i).
\]
This sequence appears in OEIS as~A098746:
\[
1, 1, 2, 6, 23, 102, 495, 2549, 13682, 75714, 428882,\ldots
\]
and counts permutations which avoid patterns $4231$ and $42513$,
see \cite{albert2004,martinez2016}.
For $n\ge1$ we have also
\[
s(n)=\sum_{i=0}^{n}\frac{n-i}{n+2i}\binom{n+2i}{i}.
\]

From the equation $\mathcal{B}_3(z)=1+z\mathcal{B}_3(z)^3$ (see \cite{gkp}) we obtain
identity
\begin{equation}\label{momgenfunequation}
zM_{\mu}(z)^2\big(M_{\mu}(z)-1\big)=z^2M_{\mu}(z)^3+\big(M_{\mu}(z)-1\big)^3,
\end{equation}
which yields the free $S$-transform
\[
S_{\mu}(z)=\frac{1+z+\sqrt{(1+z)(1-3z)}}{2(1+z)}.
\]
Substituting $zM_{\mu}(z)\mapsto z$ in (\ref{momgenfunequation})
and applying  (\ref{R2M})   %
we get
\[
z\big(R_{\mu}(z)+1\big)R_{\mu}(z)=z^2\big(R_{\mu}(z)+1\big)+R_{\mu}(z)^3.
\]
Putting $R_{\mu}(z)=z r_{\mu}(z)$ yields
\begin{equation}\label{rtransformequation1}
r_{\mu}(z)-1=zr_{\mu}(z)\big(1-r_{\mu}(z)+r_{\mu}(z)^2\big).
\end{equation}
This implies that $r_{\mu}(z)$ is the generating function
for the sequence A106228:
\[
1, 1, 2, 6, 21, 80, 322, 1347, 5798, 25512, 114236, 518848,\ldots,
\]
which counts Motzkin paths of a special kind.
These are free cumulants of $\mu$, namely $\kappa_n(\mu)=A106228(n-1)$ for $n\ge1$.
Note that the shifted sequence
\[
1, 2, 6, 21, 80, 322, 1347, 5798, 25512, 114236, 518848,\ldots
\]
is not positive definite, %
for example $\det\left(\kappa_{i+j+2}(\mu)\right)_{i,j=0}^{5}=-3374$, so
$\mu$
is not $\boxplus$-infinitely divisible, see \cite{nicaspeicher2006}.

From \eqref{rtransformequation1} one can read out that centered measure
$\nu$ with $r_{\nu}(z)=r_{\mu}(z)-1$
(so that $\nu$ is the translation of $\mu$ by $-1$) has
$$\V_{\nu}(z)=1+2z+2z^2+z^3$$
 and the comment above (or Theorem \ref{prop:Wojtek}) shows that
$\V_{\nu}\not\in\mathcal{V}_\infty$.

\subsection{More on generating functions}
Several authors considered families of   polynomials $\{T_n\}$ with the generating function of the form
\begin{equation}\label{csk type}
\displaystyle\sum_{n=0}^\infty T_n(x)z^n=\frac{M(z)}{N(z)-z x},
\end{equation}
where  $z\mapsto M(z)$ and $z\mapsto N(z)$ are analytic functions in the neighborhood of $0\in\CC$ with $M(0)=N(0)\ne 0$.
See \cite[Lemma 2]{Anshelevich01} with his $u(z)=z/M(z)$ and $f(z)=N(z)/z$  or the generating function in \cite[(3.10)]{Fakhfakh2017}. %
 (See also \cite{bozejko-demni2009generating,Kubo-Kuo-2007}, and the discussion in \cite{bozejko-demni2008Meixner}.)

At first sight \eqref{csk type} looks more general than \eqref{f2P}, but in fact the difference is superficial. The following
result was inspired by results in \cite[Section 3.2]{Fakhfakh2017}.

\begin{proposition}\label{TH1}
Let $\nu$ be a non-degenerate compactly supported probability measure   with mean $0$. Suppose that
the sequence of polynomials $\{T_n\}$ has generating function \eqref{csk type},  $\int T_n(x)\nu(dx)=0$ for $n\geq 1$, and $\int T_n(x)T_1(x)\nu(dx)=0$ for $n\geq 2$. Let $\V$ be the variance function of the \CSK family generated by $\nu$.

Then, with $t=\V(0)/M(0)$ we have $M(z)=\V(tz)/t$ and $N(z)=\V(zt)/t+t z^2$.
In particular, $T_n(x)=t^n P_n(x)$ for all $n=0,1,2\dots$, where the sequence $\{P_n\}$ is given by expansion \eqref{f2P} for the density of the \CSK family generated by $\nu$.
\end{proposition}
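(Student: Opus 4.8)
The plan is to use the two orthogonality hypotheses to identify the pair $(M,N)$ with the variance function $\V$, working throughout with the Cauchy--Stieltjes transform $G_\nu$ and its reciprocal $F_\nu$. Write $m_0:=M(0)=N(0)\ne0$. Evaluating \eqref{csk type} at $z=0$ gives $T_0\equiv1$, and since $\nu$ has compact support the series \eqref{csk type} converges uniformly on $\mathrm{supp}\,\nu$ for small $|z|$, so it may be integrated term by term. Using $\int T_0\,d\nu=1$ and $\int T_n\,d\nu=0$ for $n\ge1$, I would first obtain
\[
1=\int\frac{M(z)}{N(z)-zx}\,\nu(dx)=\frac{M(z)}{z}\,G_\nu\!\left(\frac{N(z)}{z}\right),
\]
that is $F_\nu\big(N(z)/z\big)=M(z)/z$. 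Extracting the coefficient of $z$ in \eqref{csk type} shows $T_1(x)=(x+M'(0)-N'(0))/m_0$, and $\int T_1\,d\nu=0$ together with $\int x\,d\nu=0$ forces $M'(0)=N'(0)$, hence $T_1(x)=x/m_0$.

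Next I would bring in the second hypothesis. Integrating the product of $T_1$ and \eqref{csk type} against $\nu$ term by term and using $\int T_1\,d\nu=0$ and $\int T_1T_n\,d\nu=0$ for $n\ge2$ gives
\[
\int T_1(x)\,\frac{M(z)}{N(z)-zx}\,\nu(dx)=z\int T_1^2\,d\nu=\frac{z\,\V(0)}{m_0^2},
\]
because the variance of $\nu$ equals $\V(0)$. The same integral can be computed directly: from $zx/(N(z)-zx)=-1+N(z)/(N(z)-zx)$ and the relation $G_\nu(N(z)/z)=z/M(z)$ obtained above, one finds $\int x/(N(z)-zx)\,\nu(dx)=(N(z)-M(z))/(zM(z))$, whence the left-hand side equals $(N(z)-M(z))/(m_0z)$. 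Equating the two expressions yields $N(z)-M(z)=t\,z^2$ with $t=\V(0)/m_0=\V(0)/M(0)$.

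It remains to identify $M$ itself, and this is the analytic heart of the argument. Substituting $N(z)=M(z)+tz^2$ into $F_\nu(N(z)/z)=M(z)/z$ shows that $w:=N(z)/z$ solves $w-F_\nu(w)=tz$ with $w\to\infty$ as $z\to0$. On the other hand, \eqref{FGr} gives $F_\nu(r_\nu(s)+1/s)=1/s$, so the point $w_0:=r_\nu(s)+1/s$ satisfies $w_0-F_\nu(w_0)=r_\nu(s)$; choosing $s=tz/\V(tz)$, which by \eqref{r-V} is exactly the composition inverse of $r_\nu$ evaluated at $tz$, gives $r_\nu(s)=tz$ and $1/s=\V(tz)/(tz)$, so $w_0=tz+\V(tz)/(tz)$ also solves $w_0-F_\nu(w_0)=tz$ with $w_0\to\infty$. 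Since $\zeta\mapsto\zeta-F_\nu(\zeta)$ is injective in a neighborhood of the point at infinity (it behaves like $\V(0)/\zeta$ there), I conclude $N(z)/z=w_0=tz+\V(tz)/(tz)$, hence $M(z)=\V(tz)/t$ and $N(z)=\V(tz)/t+tz^2$. I expect the main obstacle to be precisely this uniqueness step: everything preceding it is formal manipulation of the integrated generating function, whereas here one must invoke local invertibility of $F_\nu$ (equivalently, of $r_\nu$) near infinity.

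Finally, I would substitute these formulas into \eqref{csk type} and clear the factor $t$ to obtain
\[
\sum_{n=0}^\infty T_n(x)z^n=\frac{\V(tz)}{\V(tz)+(tz)^2-(tz)x}=f(x,tz)=\sum_{n=0}^\infty P_n(x)(tz)^n,
\]
using \eqref{f(x,m)} and \eqref{f2P}. Comparing coefficients of $z^n$ then yields $T_n(x)=t^nP_n(x)$ for all $n$, completing the proof.
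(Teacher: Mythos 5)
Your proposal is correct and follows the same skeleton as the paper's proof: term-by-term integration to get $\int M(z)/(N(z)-zx)\,\nu(dx)=1$, the deduction $T_1(x)=x/M(0)$, and the partial-fraction computation of $\int T_1(x)\,M(z)/(N(z)-zx)\,\nu(dx)$ yielding $N(z)-M(z)=tz^2$, followed by substitution back into \eqref{csk type}. The only difference is the final identification of $M$: the paper rewrites the integral identity as $\int \V(m)/(\V(m)+m(m-x))\,\nu(dx)=1$ with $\V(m):=tM(m/t)$ and invokes the uniqueness of the variance function attached to $\nu$, whereas you prove that uniqueness on the spot via \eqref{FGr}, \eqref{r-V} and the local injectivity of $\zeta\mapsto\zeta-F_\nu(\zeta)$ near infinity --- a correct, self-contained justification of the same step.
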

(Polynomials $\{P_n\}$ are monic if the variance of $\nu$ is $1$.)

We remark that if in addition,  $\int T_2(x)T_n(x)\nu(dx)=0$ for $n\geq d+2$, then  by Proposition \ref{T:V2d} the variance
function of the \CSK family generated by $\nu$ is a polynomial of degree at most $d+1$. When $d=1$, this  recovers
\cite[Corollary 3.6]{Fakhfakh2017}.
For related results with exponential rather than Cauchy generating functions see \cite{kokonendji2005characterizations,varma2016characterization}.

In order to be able to integrate the series term by term,  we  first confirm that the series  converges uniformly over $x$ from any
compact set. (Compare Proposition \ref{Lem-intP}(i).)

\begin{lemma}\label{Lem-intPA}
Fix $M>0$. Then there is $r>0$ such that the series \eqref{csk type}  converges for all $|x|<M$ and all $|m|<r$.
\end{lemma}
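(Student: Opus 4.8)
The plan is to recognize that the right-hand side of \eqref{csk type} is, for each fixed $x$, an analytic function of $z$ in a neighborhood of $0$, and that the polynomials $T_n(x)$ are precisely its Taylor coefficients in $z$; the entire issue is to produce a single disk radius $r$ that works simultaneously for every $x$ with $|x|<M$. The main obstacle is exactly this uniformity: for an individual $x$, analyticity near $0$ is automatic from $M(0)=N(0)\ne 0$, but one must keep the denominator $N(z)-zx$ bounded away from $0$ on a fixed circle, uniformly in $x$.

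First I would fix a small $\rho>0$ on which both $M(\cdot)$ and $N(\cdot)$ are analytic and bounded, and set $c:=M(0)=N(0)\ne 0$. Using continuity of $N$ at $0$ together with the elementary bound $|zx|\le M|z|$ valid for $|x|<M$, I would then choose $r\in(0,\rho]$ so small that, for all $|z|\le r$ and all $|x|<M$,
\begin{equation*}
|N(z)-zx|\ge |c|-|N(z)-N(0)|-|z|\,|x|\ge \tfrac{|c|}{2}>0.
\end{equation*}
This is the step carrying the real content: it forces the generating function to be holomorphic in $z$ on the disk $|z|<r$ simultaneously for every admissible $x$.

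With this lower bound in hand, on the circle $|z|=r$ I would estimate
\begin{equation*}
\left|\frac{M(z)}{N(z)-zx}\right|\le \frac{2}{|c|}\sup_{|z|\le r}|M(z)|=:B,
\end{equation*}
a bound independent of $x$ for $|x|<M$. Cauchy's formula for the Taylor coefficients,
\begin{equation*}
T_n(x)=\frac{1}{2\pi i}\oint_{|z|=r}\frac{M(z)}{N(z)-zx}\,\frac{dz}{z^{n+1}},
\end{equation*}
then yields $|T_n(x)|\le B/r^{\,n}$ uniformly over $|x|<M$. Consequently the terms of $\sum_n T_n(x)z^n$ are dominated by the geometric series $B\sum_n (|z|/r)^n$, so the series converges absolutely, and locally uniformly in $x$, for every $|x|<M$ and $|z|<r$; by uniqueness of Taylor coefficients its sum agrees with \eqref{csk type}. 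I expect the only delicate point to be the bookkeeping in choosing $r$ so that the denominator estimate holds for the whole range $|x|<M$ at once; everything afterwards is the standard Cauchy-estimate argument, exactly parallel to the proof of Proposition~\ref{Lem-intP}(i).
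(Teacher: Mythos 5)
Your proof is correct and follows essentially the same route as the paper: the key step in both is choosing $r$ so that the denominator $N(z)-zx$ has no zeros for $|z|<r$ uniformly over $|x|<M$. The paper then simply invokes the fact that the radius of convergence is the distance to the nearest singularity, whereas you make the conclusion explicit via Cauchy coefficient estimates; this is a more detailed (and arguably more careful, since it avoids any implicit assumption that the only singularities come from zeros of the denominator) write-up of the same argument.
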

\begin{proof}
The $x$-dependent radius
$r(x)$ of convergence of the series is the minimum modulus  root of equation $ N(z)-zx=0$. Since $N(0)\ne 0$ it is clear
that for every $M>0$ there is $r>0$ such that $|N(z)|>|zx|$ for all $|z|<r$ and all $|x|<M$. So there are no roots in the
disk $|z|<r$ and the radius of convergence is at least $r$.
\end{proof}

\begin{proof}[Proof of Proposition \ref{TH1}]

Choose $r>0$ such that the series \eqref{csk type} converges for all $x$ from the support of $\nu$.   Integrating term-by-term with respect to $\nu$ we get
\begin{multline*}
\int\left(\sum_{n=0}^\infty T_n(x)z^n\right)\nu(dx)= \sum_{n=0}^\infty \int T_n(x) z^n \nu(dx)=\int T_0(x)\nu(dx)
=\int\frac{M(0)} {N(0)}\nu(dx)=1.
\end{multline*} We therefore get
\begin{equation}
  \label{V-H}\int \frac{M(z)}{N(z) -zx}\nu(dx)=1
\end{equation}
for all real $z$ close enough to $0$.

Using this and \eqref{csk type} we  compute
$T_1(x)=(x+M'(0)-N'(0))/M(0)$. Since $\int T_1(x)\nu(dx)=0$, we see that $M'(0)=H'(0)$ and $T_1(x)=\alpha x $ with $\alpha=1/M(0)\ne0$.

  Since $T_1(x)$ is bounded on the support of $\nu$ and the series converges uniformly,   integrating term by term we get
\begin{multline}\label{TnT1}
\int\left(\sum_{n=0}^\infty T_n(x)T_1(x)z^n\right)\nu(dx)=\sum_{n=0}^\infty\int T_n(x)T_1(x)z^n \nu(dx)
\\=z\int T_1^2(x) \nu(dx)  =\alpha^2 \V(0) z,
\end{multline}
 where $\V(0)>0$ is the variance of $\nu$ (recall that $\nu$ is non-degenerate).
On the other hand, using partial fractions we get
\begin{multline}\label{MT1}
\int \frac{M(z)}{N(z)-z x}T_1(x)\nu(dx)=\alpha\int M(z) \left(\frac{  N(z)}{z (N(z)-x z)}-\frac{1 }{z}\right)\nu(dx)\\
=\frac{  \alpha N(z)}{z}\int  \frac{ M(z)}{N(z)-x z} \nu(dx)-\frac{ \alpha M(z) }{z}\int 1 \nu(dx)= \alpha\frac{ N(z)- M(z)}{z}.
\end{multline}
(Here, we used \eqref{V-H} and the fact that $\nu$ is a probability measure.)
Therefore, with $t= \alpha\V(0)=\V(0)/M(0)\ne 0$,  since \eqref{TnT1} and \eqref{MT1} are equal we get $N(z)=M(z)+t z^2$, and \eqref{V-H} takes the form
$$
\int \frac{M(z)}{M(z)+tz^2-z x} \nu(dx)=1.
$$
Substituting $z=m/t$ and setting $\V(m)=tM(m/t)$ we see that
$$
\int \frac{\V(m)}{\V(m)+m(m-x)} \nu(dx)=1.
$$
This shows that  $\V(m)=tM(m/t)$  is the variance function of the \CSK family generated by $\nu$, and it defines  the  corresponding polynomials $\{P_n\}$ via \eqref{f2P} .

To relate polynomials $T_n$  and polynomials $P_n$  we use the above identities to re-write \eqref{csk type} as follows:
$$
\sum_{n=0}^\infty \frac{T_n(x)}{t^n}m^n=\frac{M(m/t)}{N(m/t)-mx/t}=\frac{\V(m)}{\V(m)+m(m-x)}=\sum_{n=0}^\infty P_n(x) m^n.
$$
\end{proof}

\subsection{Sharpness of some results}

\begin{remark} %
\cite[Corollary 2.5]{chistyakov2011characterization} implies sharp results about general quartic polynomials. For example, one can deduce that   $1+ a z^4\in\mathcal{V}$ if and only if  $-1\leq 12a\leq 3$.
\end{remark}
\begin{remark}\label{R:sharp2}
  Theorem \ref{Prop:operations}\eqref{P-W-4}   does not extend to
  $\V_1,\V_2\in\mathcal{V}$. To see this, consider $\V_1(z)=\V_2(z)=1+z^3/6$, which is in $\mathcal{V}$  by Theorem \ref{prop:Wojtek}.
Applying the  operation $\V_1+\V_2-1$  twelve times, we'd get $1+2z^3\in\mathcal{V}$. The latter is not possible.
 Using recursion \eqref{CCRd} and Proposition \ref{T:V2d}(i), one can compute low order moments of the  measure corresponding to the variance function $1+c z^3$.
  The first six moments are  $ (m_1,\dots,m_6)=(0, 1, 0, 2, c, 5)$.
The $4\times 4$ Hankel determinant of these moments is $1-c^2$,  so  $1+2z^3$ is not a variance function.

\end{remark}
\begin{remark}
   Theorem \ref{Prop:operations}\eqref{P:V-z^2}   does not extend to $\V_1\in\mathcal{V}$. To see this, consider   $\V_1(z)=1+4 z^2+2 z^3$, which is in $\mathcal{V}$ by Theorem \ref{prop:Wojtek},
and apply the operation 4 times to get  $1+2z^3$ which is not in $\mathcal{V}$, as was already noted in Remark \ref{R:sharp2}.
\end{remark}

{\bf Acknowledgement}
The authors thank  Takahiro Hasebe and Kamil Szpojankowski for helpful discussions. %
 W{\l}odzimierz
 Bryc's research was supported in part by the Charles Phelps Taft Research Center at the University of Cincinnati.
 Wojciech M{\l}otkowski is supported by NCN grant 2016/21/B/ST1/00628.


\end{document}